\documentclass[12pt]{amsart}

\usepackage{graphicx}
\usepackage{pst-all}
\usepackage{amsmath}
\usepackage{amsfonts}
\usepackage{amssymb}
\usepackage{amsthm}
\usepackage{url}
\usepackage{subfigure}
\usepackage{todonotes}
\input xy
\xyoption{all}
\usepackage{nicefrac}
\usepackage{url}

\addtolength{\textwidth}{4cm}
\addtolength{\evensidemargin}{-2.05cm}
\addtolength{\oddsidemargin}{-2.15cm}
\addtolength{\textheight}{1.1cm} \addtolength{\topmargin}{-0.5cm}

\newtheorem*{theorem**}{Theorem~A}
\newtheorem*{theorem***}{Theorem~B}
\newtheorem*{theorem****}{Theorem~C}
\newtheorem{theorem}{Theorem}[section]
\newtheorem*{theorem*}{Theorem}
\newtheorem{corollary}[theorem]{Corollary}
\newtheorem*{corollary*}{Corollary}

\newtheorem{proposition}[theorem]{Proposition}
\theoremstyle{definition}
\newtheorem*{stand}{Standing assumption}
\newtheorem{definition}[theorem]{Definition}
\newtheorem{example}[theorem]{Example}
\newtheorem{remark}[theorem]{Remark}

\DeclareMathOperator{\id}{id}
\DeclareMathOperator{\Sh}{Sh}
\DeclareMathOperator{\im}{Im}
\DeclareMathOperator{\rk}{rk}

\thanks{The authors are partially supported by the Spanish Ministerio de Ciencia, Innovaci{\'{o}}n y Universidades (grant PID2021-126124NB-I00).}

\begin{document}

\author{H\'ector Barge}
\address{E.T.S. Ingenieros inform\'{a}ticos. Universidad Polit\'{e}cnica de Madrid. 28660 Madrid (Spain)}
\email{h.barge@upm.es}

\author{Jos\'e M.R. Sanjurjo }
\address{Facultad de Ciencias Matem{\'a}ticas and Instituto de Matem\'atica Interdisciplinar (IMI). Universidad Complutense de Madrid. 28040 Madrid (Spain)}
\email{jose\_sanjurjo@mat.ucm.es}
\keywords{Shape index, Brouwer degree, Poincar\'e-Hopf theorem, Non-saddle set}
\subjclass[2020]{37B30, 37B25, 55M25}
\title{Shape index, Brouwer degree and Poincar\'e-Hopf theorem}

\begin{abstract} 
In this paper we study the relationship of the Brouwer degree of a
vector field with the dynamics of the induced flow. Analogous relations are
studied for the index of a vector field. We obtain new forms of the Poincar%
\'{e}-Hopf theorem and of the Borsuk and Hirsch antipodal theorems. As an
application, we calculate the  Brouwer degree of the vector field of the
Lorenz equations in isolating blocks of the Lorenz strange set.
\end{abstract}

\maketitle

\section*{Introduction}

The aim of this paper is to study the relationship of the Brouwer degree of
a vector field with the dynamics of the induced flow, in particular with the
dynamical and topological properties of the isolated invariants sets and
their unstable manifolds. Analogous relations are studied for the index of a
vector field, obtaining in this way new forms of the Poincar\'{e}-Hopf
theorem. This classic result has brought a considerable amount of attention in the past ten years and several results in the same spirit have been obtained in different contexts (see for instance \cite{CroGra, KinTro, LiC, MuVan, Rau}).   

As consequences of these relations we also obtain generalizations of Borsuk's and
Hirsch's antipodal theorems for domains that are isolating blocks. We
calculate the Brouwer degree and the index of vector fields in several
situations of dynamical and topological significance. Some applications
include the detection of linking orbits in attractor-repeller decompositions
of isolated invariant compacta and the calculation of the Brouwer degree of
the vector field of the Lorenz equations in isolating blocks of the Lorenz
strange set. Furthermore, we present an
expression of the shape index, originally defined by Robbin and Salamon \cite{RobSal},
in terms of the Euclidean topology. This new expression is quite intuitive
and easy to handle.

We consider flows $\varphi :\mathbb{R}^{n}\times \mathbb{R}\longrightarrow 
\mathbb{R}^{n}$ in the Euclidean space, induced by smooth vector fields $F:%
\mathbb{R}^{n}\longrightarrow \mathbb{R}^{n}$. We will use through the paper
some basic notions of dynamical systems (see \cite{BhSz}) and the Conley index theory (see \cite{Con, Sal}).

We shall make use of the concepts of $\omega$-limit and $\omega^*$-limit of a compactum $X\subset\mathbb{R}^n$ defined as
\[
\omega(X)=\bigcap_{t\geq 0} \overline{X[t,+\infty)}, \quad  \omega^*(X)=\bigcap_{t\leq 0} \overline{X(-\infty,t]}.
\]

We recall that a compact invariant set $K\subset\mathbb{R}^n$ is said to be \emph{isolated} whenever it is the maximal invariant subset of some neighborhod $N$ of itself. A neighborhood $N$ satisfying this requirement is known with the name of \emph{isolating neighborhood}. We shall make extensive use of a special kind of isolating neighborhoods called isolating blocks. An \emph{isolating block} $N$  is an isolating neighborhood with the property that there exist compact subsets $L,L'\subset \partial N$ called the exit and entrance sets such that:
\begin{enumerate}
\item $\partial N=L\cup L'$.
\item For every $x\in L'$ there exists $\varepsilon>0$ such that $x[-\varepsilon,0)\subset\mathbb{R}^n\setminus N$ and for every $x\in L$ there exists $\delta>0$ such that $x[0,\delta)\subset\mathbb{R}^n\setminus N$.
\item For each $x\in L$ there exists $\varepsilon>0$ such that $x[-\varepsilon,0)\subset \mathring{N}$ and for every $x\in\partial N\setminus L$ there exists $\delta>0$ such that $x(0,\delta]\subset\mathring{N}$.
\end{enumerate}

It is well known that an isolating invariant set possesses a basis of neighborhoods comprised of isolating blocks. Moreover, since we are dealing with smooth flows, these blocks can be chosen to be $n$-dimensional manifolds with boundary, satisfying that $L$ and $L'$ are $(n-1)$-dimensional submanifolds of $\partial N$ of with $\partial L=L\cap L'=\partial L'$ (see \cite{ConEast} or \cite[Ap\'endice~A.2]{SGth}). In this case, the points of $L\cap L'$ are exactly those where the vector field is tangent to $\partial N$. All the isolating blocks considered in this paper will be assumed to be of this kind without explicitly mention it.

 Among isolated invariant sets attractors play a central role. An \emph{attractor} $K$ is a compact invariant set that is stable and possesses a neighborhood $U$ such that the omega-limit $\omega(x)$ of each point in $U$ is non-empty and is contained in $K$. The condition on stability means, roughly speaking, that the positive semi-trajectories of nearby points remain nearby. More precisely, if $V$ is any neighborhood of $K$, there exists a neighborhood $U$ of $K$ with the property that $U[0,+\infty)\subset V$. An attractor is said to be \emph{global} if the neighborhood $U$ can be chosen to be the total space. Repellers are defined in an analogous way using the negative omega-limit $\omega^*$ and negative semi-trajectories. Notice that a repeller is just an attractor for the flow obtained by changing the sign of the time variable.  
 
We are going to use some notions from algebraic topology, including duality theorems. All the material we are going to need is covered in the books by Hatcher \cite{Hat}, Munkres \cite{Munk} and Spanier \cite{Span}. We use the notation $H_*$ and $H^*$ to denote the singular homology and cohomology functors and $\check{H}^*$ for the \v Cech cohomology functor, all of them with integer coefficients unless otherwise specified. We recall that \v Cech and singular cohomology theories coincide on manifolds and, more generally, polyhedra and pairs of such spaces. We say that a pair of spaces $(X,A)$ is of \emph{finite type} whenever $\check{H}^k(X,A)$ is finitely generated for every $k$ and non-zero for only a finite number of values of $k$. Pairs of compact manifolds or, more generally, polyhedra are examples of pairs of finite type. While attractors and repellers of flows defined on euclidean spaces are  of finite type (\cite[Corollary~4.2]{KapRod}), this is not the case for every isolated invariant set. For instance,  \cite[Remark~9]{BNLA} shows an example of a flow on $\mathbb{R}^3$ which has the hawaaian earring as an isolated invariant set. In order to make our statements cleaner we shall make the following standing assumption.

\begin{stand}
Whenever $K$ is an isolated invariant set, we shall assume, without further mention, that $K$ is of finite type. 
\end{stand}

If a pair $(X,A)$ is of finite type, its \emph{Euler characteristic} is defined as follows
\[
\chi(X,A)=\sum_{k} (-1)^k\rk \check{H}^k(X,A). 
\]

We shall make use of the following property of the Euler characteristic \cite[Exercise~B.1, p. 205]{Span}: If two of the three $(X,A)$, $X$, $A$, are of finite type, then so is the third and
\[
\chi(X)=\chi(X,A)+\chi(A).
\]

Notice that if $(X,A)$ is a pair of manifolds or polyhedra, then the Euler characteristic defined in an analogous way using singular cohomology coincides with the previous one.

The \emph{Conley index} of an isolated invariant set $K$, denoted $h(K)$, is defined as the pointed homotopy of the quotient space $(N/L,[L])$ where $N$ is any isolating block for $K$. Notice that, while different isolating blocks may represent different homotopy types, the Conley index only depends on $K$. The \emph{cohomology index} $CH^*(K)$ is defined to be the \v Cech cohomology of the pointed space $(N/L,[L])$. Using the strong excision property of \v Cech cohomology we get that the cohomology index is isomorphic to $\check{H}^*(N,L)$. 

Given an isolating block $N$ for an isolated invariant set $K$, we shall denote by $\deg(F,N)$ to the degree of $F_{|_{\mathring{N}}}$ and, if $K$ has only a finite number of singularities, by $I(F_{|_N})$ to the total index of $F_{|_N}$, that is, the sum of the indices of the singularities. When we refer to $I(F_{|_N})$, we
say that the index is defined if $F_{|_N}$ has a finite number of singular points.  For a detailed treatment of mapping degree theory, including the index theory of vector fields, see the book by Outerelo and Ruiz \cite{OuRu}.

We shall make use of the following result, obtained by  Srzednicki \cite{SrFM}, McCord \cite{McH}, Fotouhi and Razvan \cite{RaFo}, in different levels of generality, that relates degree of a vector field near an isolated invariant set, with its Conley index. We present here a slightly different but equivalent version of the one presented by Izydorek and Styborski in \cite[Theorem~4.2]{IzSty}.

\begin{theorem}\label{thm:SrMc}
Let $\varphi:\mathbb{R}^n\times\mathbb{R}\longrightarrow\mathbb{R}^n$ be a flow induced by a smooth vector field $F$ defined on $\mathbb{R}^n$. Suppose that $K$ is an isolated invariant set for $\varphi$ and $N$ an isolating block for $K$. Then,
\[
\deg(F,N)=(-1)^n\chi(h(K)).
\] 
Moreover, if $K$ contains only a finite number of equilibria then
\[
I(F_{|_N})=(-1)^n\chi(h(K)).
\]
\end{theorem}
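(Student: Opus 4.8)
The plan is to deduce both identities from a generalized, Morse-type Poincar\'e--Hopf formula for vector fields on a manifold with boundary, exploiting the decomposition $\partial N=L\cup L'$ carried by an isolating block. First, observe that the two displayed formulas are really one statement: since $F$ does not vanish on $\partial N$ (it is transverse to $\partial N$ off $L\cap L'$ and tangent but nonzero along $L\cap L'=\partial L=\partial L'$), all zeros of $F_{|_N}$ lie in $\mathring N$, and when there are finitely many of them the Brouwer degree $\deg(F,N)$ is by definition the sum of their local indices, i.e.\ $I(F_{|_N})$. Moreover, because $CH^*(K)\cong\check H^*(N,L)$ we have $\chi(h(K))=\chi(N,L)$, and the additivity property quoted above gives $\chi(N,L)=\chi(N)-\chi(L)$, the pair $(N,L)$ being of finite type as a manifold pair. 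Thus it suffices to prove $I(F_{|_N})=(-1)^n\bigl(\chi(N)-\chi(L)\bigr)$ under the assumption of finitely many equilibria, and then to recover the general degree statement by a homotopy argument.

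The key step is to apply the generalized Poincar\'e--Hopf theorem not to $F$ but to the reversed field $-F$. Conditions (2)--(3) identify $L'$ as the set where $F$ points into $N$ and $L$ as the set where $F$ points out of $N$; hence $-F$ points \emph{inward} precisely along $L$. Morse's index formula for a field with isolated interior zeros and inward boundary part $\partial_-N$ reads $\sum_p\operatorname{ind}_p(-F)=\chi(N)-\chi(\partial_-N)$, and with $\partial_-N=L$ this yields
\[ \sum_p\operatorname{ind}_p(-F)=\chi(N)-\chi(L)=\chi(N,L)=\chi(h(K)). \]

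Next I would convert indices of $-F$ into indices of $F$. At an isolated zero $p$ the local index of $-F$ is the degree of $x\mapsto -F(x)/|F(x)|$ on a small sphere about $p$, which is the composition of the antipodal map of $S^{n-1}$ with the normalized field $x\mapsto F(x)/|F(x)|$; since the antipode has degree $(-1)^n$ we get $\operatorname{ind}_p(-F)=(-1)^n\operatorname{ind}_p(F)$. Summing and combining with the previous display gives $(-1)^n I(F_{|_N})=\chi(h(K))$, that is $I(F_{|_N})=(-1)^n\chi(h(K))$. To remove the finiteness hypothesis in the degree statement, I would perturb $F$ to a field $F_\varepsilon$ that agrees with $F$ near $\partial N$ and has only nondegenerate zeros in $\mathring N$; the straight-line homotopy then avoids $0$ on $\partial N$, so $\deg(F,N)=\deg(F_\varepsilon,N)$, while $N$ stays an isolating block with the same exit set $L$, hence the same Conley index. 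The already-proved identity for $F_\varepsilon$ gives $\deg(F,N)=(-1)^n\chi(h(K))$.

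The main obstacle I anticipate lies in the generalized Poincar\'e--Hopf step, because $F$ is genuinely tangent to $\partial N$ along the corner locus $L\cap L'$ rather than transverse everywhere. Making $\sum\operatorname{ind}=\chi(N)-\chi(\partial_-N)$ rigorous therefore requires either a version valid for fields with controlled boundary tangencies (Morse, Pugh, Gottlieb), or first modifying $-F$ by a collar isotopy so that it becomes strictly inward on $L$ and strictly outward on $L'$; checking that such a modification alters neither $\sum\operatorname{ind}_p$ nor $\chi(\partial_-N)$ is the delicate point. As an alternative to the reversed-field trick, one could apply the formula to $F$ directly to get $I(F_{|_N})=\chi(N)-\chi(L')=\chi(N,L')$ and then invoke Poincar\'e--Lefschetz duality $H^k(N,L)\cong H_{n-k}(N,L')$ to rewrite $\chi(N,L')=(-1)^n\chi(N,L)$; I expect the $-F$ route to be cleaner, as it bypasses the duality bookkeeping entirely.
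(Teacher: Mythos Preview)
The paper does not actually prove this theorem: it quotes it from the literature (Srzednicki, McCord, Fotouhi--Razvan, Izydorek--Styborski) and only adds the remark that the second formula follows from the first because, by additivity, $\deg(F,N)$ equals the sum of the local indices when there are finitely many equilibria. So there is no in-paper proof to compare against beyond that one-line reduction, which you reproduce in your first paragraph.

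Your argument is a correct direct proof, and it is in essence the one used in the original references: reduce $\chi(h(K))$ to $\chi(N,L)=\chi(N)-\chi(L)$, apply the Morse/Pugh boundary form of Poincar\'e--Hopf to $-F$ so that the inward part of $\partial N$ is exactly $L$, and then use $\operatorname{ind}_p(-F)=(-1)^n\operatorname{ind}_p(F)$. The perturbation step to pass from finitely many zeros to the general degree statement is fine: since $F_\varepsilon=F$ near $\partial N$, the pair $(N,L)$ is unchanged, so $\chi(h(K_\varepsilon))=\chi(N,L)=\chi(h(K))$ regardless of what the new maximal invariant set $K_\varepsilon$ looks like. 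Your alternative route via $I(F_{|_N})=\chi(N,L')$ and Lefschetz duality $\chi(N,L')=(-1)^n\chi(N,L)$ is equally valid and is closer to how some of the cited papers phrase it.

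The one genuine technicality is exactly the one you flag: along $L\cap L'$ the field is tangent to $\partial N$, so the naive hypothesis of Morse's formula (strict transversality along $\partial N$) fails. This is handled in the literature either by Pugh's generalization, which allows a nowhere-vanishing boundary field with the correct ``type numbers'' on the tangency locus, or by a collar modification making the field strictly outward on $L$ and strictly inward on $L'$; since the modification is through nonvanishing fields on $\partial N$, it changes neither the interior indices nor the homotopy class of $F/|F|$ on $\partial N$, hence neither side of the identity. With that caveat addressed your proof is complete.
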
 

Notice that the Euler characteristic of the Conley index is well-defined, taking into account that the pair $(N,L)$ used to compute it can be chosen to be a pair of compact manifolds. The second part of the statement follows from the fact that the Brouwer degree is,
by the additivity property, the sum of the indices of all the singular points of $F$ in $N$. 
 
Finally, we will also use some elementary facts from Borsuk's homotopy theory (named Shape Theory by him). This theory was introduced by K. Bosuk in 1968 in order to study homotopy properties of compacta with bad local behaviour for which the classical homotopy theory is not well suited. We are not going to make an extensive use of Borsuk's homotopy theory, in particular we are only interested in the following very simple situation:
Consider a compact metric space $K$, a closed subspace $K_{0}$ and a
sequence of maps $f_{k}:K\longrightarrow K$ such that ${f_{k}}_{|_{K_{0}}}:K_{0}%
\longrightarrow K_{0}$ (i.e. ${f_{k}}_{|_{K_{0}}}$ maps $K_{0}$ to itself) and the
following conditions hold for almost every $k$:
\begin{enumerate}
\item For every neighborhood $U$ of $K_{0}$ in $K$
we have $f_{k}\simeq f_{k+1}$ in $U$.

\item  $f_{k}\simeq \id_{K}.$

\item ${f_{k}}_{|_{K_{0}}}\simeq \id_{K_{0}}$ in $K_{0}.$
\end{enumerate}

Then $K$ and $K_0$ have the same shape (there is an analogous statement for pointed
shape). We shall use the notation $\Sh(K)=\Sh(K_{0})$ to denote that both $K$ and $K_0$ have the same shape. We shall also make use of the following fact from shape theory
\begin{enumerate}
\item If $X$ and $Y$ have the same homotopy type, then they have the same shape.
\item If $X$ and $Y$ are polyhedra (or more generally, ANR), $X$ and $Y$ have the same shape if and only if they have the same homotopy type.
\item If $X$ and $Y$ have the same shape, then they have isomorphic \v Cech cohomology groups. 
\end{enumerate}

More information about the theory of shape can be found in the books by Borsuk \cite{Bormono}, by Marde\v si\'c and Segal \cite{MarSe} and by Dydak and Segal \cite{DySe}. Some applications of shape theory to dynamics can be seen in \cite{KapRod, SanMul}.

\section{Shape index, initial sections and the degree of a vector field}

In \cite{BSJMA}, the authors proved that some parts of the unstable manifold of an
isolated invariant set admit sections that carry a considerable amount of
information. These sections enable the construction of parallelizable
structures which facilitate the study of the flow. 

\begin{definition}
Let $K$ be an isolated invariant compactum and let $S$ be a compact section
of the truncated unstable manifold $W^{u}(K)\setminus K$. Then $S$ is said to be an
\emph{initial section} provided that $\omega^{\ast }(S)\subset K.$
\end{definition}

If $S$ is an initial section we define
\[
I_{S}^{u}(K)=S(-\infty ,0].
\]
Obviously, $I_{S}^{u}(K)=\{x\in W^{u}(K)\setminus K\mid xt\in
S$ with $t\geq 0\}$. In accordance with this terminology we say that $%
I_{S}^{u}(K)\cup K$ is an \textit{initial part of the unstable manifold }of $%
K$ and we denote it by $W_{S}^{u}(K).$ In \cite{BSJMA} it was proved that, although $%
I_{S}^{u}(K)$ depends on $S$, all the initial parts have basically the same
structure. More specifically, if $S$ and $T$ are initial sections of $W^u(K)$, the pairs $(W^u_S,S)$ and $(W^u_T,T)$ are homeomorphic.

Analogous notions known as \emph{final section} and \emph{final part} of the stable manifold can be defined and have similar properties. 

If $N$ is an isolating block for $K$, we denote by $N^{-}$ the negative asymptotic set, that is, the set $\{x\in N\mid xt\in N$ for
every $t\leq 0\}$. Set $n^{-}=N^{-}\cap L$. It is easy to see that $N^{-}$ is an initial part of the unstable manifold with initial section $n^{-}$. The positive asymptotic set $N^+$ is defined in an analogous way and is a final part of the stable stable manifold with final section $n^+=N^+\cap L'$.

In this paper we make some use of the shape index of an invariant isolated
set $K$. The shape index $S(K)$ was introduced by Robbin and Salamon in \cite{RobSal}
as $\Sh(N/L,\ast )$, where $\ast =[L].$ The cohomology of the shape index is
the classical cohomological (Conley) index. In \cite{Sanjnon}, the second author showed
that the shape index can be represented in terms of compact sections of the
unstable manifold endowed with the intrinsic topology (as defined also by
Robbin and Salamon). The constraint of the intrinsic topology is
substantial, since this topology is not very intuitive and difficult to
handle, so we believe that an expression of the shape index in terms of the
Euclidean (or extrinsic) topology is much more useful and we find it in the
first result of the paper. A crucial element of this expression is the use 
of the initial sections of truncated manifolds as defined above.

\begin{theorem}\label{thm:1}
Let $\varphi :\mathbb{R}^{n}\times \mathbb{R}\longrightarrow \mathbb{R}^{n}$ be
a flow (not necessarily differentiable) and $K$ an isolated invariant set of 
$\varphi $. Let $W^{u}(K)$ the unstable manifold of $K$, $S$ an initial
section of $W^{u}$ and $W_{S}^{u}(K)$ the corresponding \emph{initial part
of the unstable manifold }of $W^{u}(K)$ .Then the shape index $S(K)$ is $%
\Sh(W_{S}^{u}(K)/S,\ast )$, that is, the pointed shape of the quotient set $%
W_{S}^{u}/S$ where $\ast =[S]$. Furthermore, if $CS$ is the cone over $S$
then $S(K)=Sh(W_{S}^{u}\cup CS,\ast ),$ were $\ast $ is the vertex of the
cone.
\end{theorem}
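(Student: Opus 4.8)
The plan is to establish the quotient form first and then deduce the cone form from it. Fix an isolating block $N$ for $K$ with exit set $L$, and recall from the discussion above that the negative asymptotic set $N^{-}$ is an initial part of the unstable manifold with initial section $n^{-}=N^{-}\cap L$; thus $N^{-}=W^{u}_{n^{-}}(K)$. By the result of \cite{BSJMA} the pairs $(W^{u}_{S},S)$ and $(W^{u}_{n^{-}},n^{-})=(N^{-},n^{-})$ are homeomorphic, so the quotients $W^{u}_{S}/S$ and $N^{-}/n^{-}$ are pointed homeomorphic and hence have the same pointed shape. Since $S(K)=\Sh(N/L,[L])$ by the definition of Robbin and Salamon, the whole first assertion reduces to proving $\Sh(N/L,[L])=\Sh(N^{-}/n^{-},[n^{-}])$ for this single, block-adapted section.

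To prove this equality I would use the shape criterion recalled in the introduction. First note that the image of $N^{-}$ under the quotient map $N\to N/L$ is a closed subspace homeomorphic to $N^{-}/n^{-}$ (because $N^{-}\cap L=n^{-}$) sharing the basepoint $\ast=[L]$; call it $K_{0}\subseteq N/L$. For $t\ge 0$ define $g_{t}\colon N/L\to N/L$ by $g_{t}([x])=[xt]$ when $x[0,t]\subseteq N$ and $g_{t}([x])=\ast$ otherwise. By the defining properties of an isolating block the induced truncated flow on $N/L$ is continuous, so each $g_{t}$ is a well-defined basepoint-preserving self-map, $g_{0}=\id$, and $s\mapsto g_{s}$ is a basepoint-preserving homotopy. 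The crucial geometric observation is that the non-basepoint part of the image of $g_{t}$ is $\{y\in N:y[-t,0]\subseteq N\}$, and these compacta decrease, as $t\to +\infty$, to $\bigcap_{t\ge 0}\{y:y[-t,0]\subseteq N\}=N^{-}$; hence the images of the $g_{t}$ converge to $K_{0}$ in the Hausdorff sense, while $g_{t}(K_{0})\subseteq K_{0}$ for every $t$ (a forward iterate of a point whose entire backward orbit lies in $N$ either leaves through $L$, landing on $\ast$, or again has its backward orbit in $N$).

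Choosing any sequence $t_{k}\to +\infty$ and setting $f_{k}=g_{t_{k}}$, I would then verify the three hypotheses of the criterion for the compactum $N/L$ and its subspace $K_{0}$. Condition $(2)$, $f_{k}\simeq\id_{N/L}$, and condition $(3)$, $f_{k}|_{K_{0}}\simeq\id_{K_{0}}$ in $K_{0}$, both hold through the flow homotopy $g_{s}$, $0\le s\le t_{k}$, which stays in $N/L$ and in $K_{0}$ respectively by the invariance just noted; condition $(1)$, that $f_{k}\simeq f_{k+1}$ inside every neighborhood $U$ of $K_{0}$, holds because for large $k$ the images $g_{s}(N/L)$ lie in $U$ for all $s\ge t_{k}$, so the connecting homotopy $g_{s}$, $t_{k}\le s\le t_{k+1}$, is confined to $U$. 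The criterion then gives $\Sh(N/L,\ast)=\Sh(K_{0},\ast)=\Sh(N^{-}/n^{-},[n^{-}])$, completing the first part. I expect the main obstacle to be exactly the continuity of $g_{t}$ as a map of the quotient together with the Hausdorff convergence of its images: both hinge on controlling trajectories tangent to, or grazing, $\partial N$, which is precisely what the block structure $\partial N=L\cup L'$ with $\partial L=L\cap L'=\partial L'$ is designed to tame.

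Finally, for the cone form I would show $\Sh(W^{u}_{S}\cup CS,\ast)=\Sh(W^{u}_{S}/S,\ast)$. The inclusion $S\hookrightarrow W^{u}_{S}$ is a cofibration, since $S=S\cdot\{0\}$ has a collar neighborhood $S(-\varepsilon,0]\cong S\times(-\varepsilon,0]$ inside $W^{u}_{S}=S(-\infty,0]\cup K$ disjoint from $K$; for a cofibration the mapping cone $W^{u}_{S}\cup CS$ and the quotient $W^{u}_{S}/S$ are homotopy equivalent, matching the cone vertex with $[S]$. Since spaces of the same homotopy type have the same shape, this identifies the two expressions, and combining with the first part yields $S(K)=\Sh(W^{u}_{S}\cup CS,\ast)$.
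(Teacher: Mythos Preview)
Your proposal is correct and follows essentially the same route as the paper: reduce to the block-adapted section $(N^{-},n^{-})$ via the homeomorphism of pairs from \cite{BSJMA}, then verify the shape criterion of the introduction using flow-induced self-maps of the index pair. The only organizational difference is that the paper first builds Wa\.zewski-type maps $f_k:N\to N$ (sending $x$ to $xk$ or to its exit point $x\alpha(x)$) and proves the convergence of images into neighborhoods of $N^{-}\cup L$ by explicit compactness/contradiction arguments before passing to the quotient, whereas you work directly with the induced semiflow $g_t$ on $N/L$ and package the same convergence as Hausdorff convergence of the decreasing compacta $\{y:y[-t,0]\subset N\}$ to $N^{-}$; these are the same argument in different clothing. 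For the cone statement the paper simply invokes \cite[Corollary~3, p.~247]{MarSe} to get that $W^{u}_{S}\cup CS\to W^{u}_{S}/S$ is a pointed shape equivalence, while your collar/cofibration argument yields a genuine pointed homotopy equivalence, which is a slightly stronger (and equally valid) conclusion.
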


\begin{proof}
Let $N$ be an isolating block of $K$ and $L$ its exit set. Since all the pairs $%
(W_{S}^{u},S),$ where $S$ is an inital section, are homeomorphic, we can
limit ourselves to the pair $(N^{-},n^{-})$. Let $\alpha :N\setminus N^{+}\longrightarrow 
\mathbb{R}$ the map defined by
\[
\alpha (x)=\max \{t\in \mathbb{R}%
\mid x[0,t]\subset N\}.
\]
Then for every $k\in \mathbb{N\cup \{}0\}$ we define
the map $f_{k}:N\longrightarrow N$ by
\[
f_{k}(x)=
\begin{cases}
kx & \mbox{if}\quad x[0,k]\subset N \\
\alpha (x)x & \mbox{otherwise.}
\end{cases}
\]

The map $f_{k}$ is continuous and fixes all points in $L$ (this is
essentialy Wazewski's Lemma \cite[Theorem~2]{Wazewski}). Suppose that $U$ is an open neighborhood of 
$N^{-}\cup L$ in $N$. We claim that there is a $k_{0}\in \mathbb{N}$ such
that $\im f_{k}\subset U$ and $f_{k}\simeq f_{k+1}$ in $U$ for every $k\geq
k_{0}$, and the homotopy leaves al points in $L$ fixed. In order to prove it
we show that there is a positive $s_{0}$ such that $N^{+}[s_{0},\infty
)\subset U$. Otherwise, there would be sequences $x_{n}\in N^{+}$ and $%
s_{n}\longrightarrow \infty $ such that $x_{n}s_{n}\rightarrow y\in N^{+}-U$.
Then, $\gamma ^{-}(y)\subset N$ and, since $y\in N^{+}$, the whole
trajectory $\gamma (y)$ would be contained in $N\setminus K$ in contradiction with
the assumption that $N$ is an isolating block of $K$. Furthermore, there is
a $s_{1}\geq s_{0}$ with the property that $xt\in U$ for every $x\in N\setminus N^{+}$
and for every $t$ such that $s_{1}\leq t\leq \alpha(x)$. Otherwise there
would be sequences $x_{n}\in N$, $t_{n}\rightarrow \infty $ with $%
x_{n}t_{n}\notin U$, $x_{n}t_{n}\rightarrow y\in N$ and $x_n[0,t_{n}]%
\subset N$. But this would imply that $y\in N^{-}$, in contradiction with
the fact that $y\notin U$, as limit of $x_{n}t_{n}$. We obtain from this
that $\im f_{k}\subset U$ for $k\geq s_{1}$ and select an index $k_{0}\geq
s_{1}.$ It is clear that the homotopy $h_{k}:N\times \lbrack 0,1]\rightarrow
N $ defined by

\[
h_{k}(x,t)= 
\begin{cases}
f_{k}(x)t& \mbox{if}\quad  f_{k}(x)[0,t]\subset N \\
x\alpha (x)& \mbox{otherwise}
\end{cases}
\]
links $f_{k}$ and $f_{k+1}$ in $U$ leaving all points in $L$ fixed.
Furthermore, the map

\[
h(x,t)=
\begin{cases}
xtk & \mbox{if}\quad x[0,tk]\subset N \\
x\alpha(x) & \mbox{otherwise,}
\end{cases}
\]

defines a homotopy $h:N\times \lbrack 0,1]\longrightarrow N$ linking $\id_{N}$
with $f_{k}.$ Similarly, it can be seen that $f_{{k}|_{N^{-}\cup L}}$ is a map $%
N^{-}\cup L\longrightarrow N^{-}\cup L$ homotopic to $\id_{N^{-}\cup L}$, with the
homotopy fixing all points in $L$.

Notice that the subspace $(N^{-}\cup L)/L\subset N/L$ can be identified with 
$N^{-}/n^{-}$. We use the notation $\ast $ to designate both the point $%
[L]\in N/L$ and the point $[n^{-}]\in N^{-}/n^{-}$. Now consider the
composition $\bar{f}_{k}=p\circ f_{k}:N\longrightarrow N/L$, where $%
p:N\longrightarrow N/L$ is the natural projection. This map induces a map $\hat{f%
}_{k}:N/L\longrightarrow N/L$ such that $\hat{f}_{k}=p\circ \bar{f}_{k}$. We
then have a sequence of maps $\hat{f}_{k}:N/L\longrightarrow N/L$ such that $%
\hat{f}_{k|_{N^{-}/n^{-}}}:N^{-}/n^{-}\longrightarrow N^{-}/n^{-}$ and the
following conditions are satified for almost every $k$:

\begin{enumerate}

\item For every neighborhood $\hat{U}$ of $N^{-}/n^{-}$ in $N/L$ we have $\hat{f%
}_{k}\simeq \hat{f}_{k+1}$ in $\hat{U}$.

\item $\hat{f}_{k}\simeq \id_{N/L}.$

\item $\hat{f}_{{k}|_{N^{-}/n^{-}}}\simeq \id_{N^{-}/n^{-}}$.

\item  All the homotopies leave the point $\ast $ fixed.
\end{enumerate}

It follows from this that $\Sh(N/L,\ast )=\Sh(N^{-}/n^{-},\ast )$ and,
therefore, $\Sh(W_{S}^{u}/S,\ast )=S(K)$ (where $\ast =[S].$)

To finish the proof it remains to observe that by \cite[Corollary~3, pg. 247]{MarSe}, the natural projection $W_{S}^{u}\cup CS\longrightarrow W_{S}^{u}/S$ is a
pointed shape equivalence and, therefore $S(K)=\Sh(W_{S}^{u}\cup
CS,\ast ),$ were $\ast $ is the vertex of the cone.
\end{proof}

\begin{remark} 
The statement in Theorem~\ref{thm:1} does not hold if the section $S$ is not initial. An example of
a compact section $S$ is given in \cite[Fig~2, pg. 840]{BSJMA} which is not initial and
such that $Sh(W_{S}^{u}(K)/S,\ast )$ is not the shape index $S(K)$.
\end{remark}

By combining Theorem~\ref{thm:1} together with Theorem~\ref{thm:SrMc} we obtain the following corollary that relates the degree of a vector field (or the index when defined) near an isolated invariant set, its Euler characteristic and the Euler characteristic of an initial section.

\begin{corollary}\label{cor:genPoinH}
Let $\varphi :\mathbb{R}^{n}\times \mathbb{R}\longrightarrow \mathbb{R}^{n}$ be
a flow induced by the smooth vector field $F:\mathbb{R}^{n}\longrightarrow 
\mathbb{R}^{n}$and $K$ an isolated invariant set of $\varphi $. Let $%
W^{u}(K) $ the unstable manifold of $K$, $S$ an initial section of $W^{u}$
and $W_{S}^{u}(K)$ the corresponding i\textit{nitial part of the unstable
manifold}. If $N$ is an isolating block of $K$ then 
\begin{equation}
\deg (F,N)=(-1)^{n}(\chi (K)-\chi (S)).
\label{eq:1}
\end{equation}
Moreover, if the index $I(F_{|_N})$ is defined then 
\begin{equation}
I(F_{|_N})=(-1)^{n}(\chi (K)-\chi (S)).
\label{eq:2}
\end{equation}
\end{corollary}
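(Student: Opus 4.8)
The plan is to chain together the two results already at our disposal. By Theorem~\ref{thm:SrMc} we have $\deg(F,N)=(-1)^{n}\chi(h(K))$, and since the cohomology of the shape index is the cohomological Conley index $\check{H}^{*}(N,L)$, the Euler characteristic $\chi(h(K))$ is exactly $\chi(N,L)$, computed from the pair of compact manifolds $(N,L)$. The goal is therefore reduced to establishing the identity $\chi(N,L)=\chi(K)-\chi(S)$, after which (\ref{eq:1}) is immediate.

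First I would translate the left-hand side into data about the initial part. By definition the shape index is $S(K)=\Sh(N/L,\ast)$, while Theorem~\ref{thm:1} identifies it with $\Sh(W_{S}^{u}/S,\ast)$. Since shape-equivalent compacta have isomorphic \v Cech cohomology (fact~(3) among the shape facts recalled above) and \v Cech cohomology satisfies strong excision, I obtain
\[
\check{H}^{*}(N,L)\cong\check{H}^{*}(N/L,\ast)\cong\check{H}^{*}(W_{S}^{u}/S,\ast)\cong\check{H}^{*}(W_{S}^{u},S),
\]
whence $\chi(N,L)=\chi(W_{S}^{u},S)$. Applying the additivity of the Euler characteristic to the pair $(W_{S}^{u},S)$ gives $\chi(W_{S}^{u},S)=\chi(W_{S}^{u})-\chi(S)$, so it remains to prove that $\chi(W_{S}^{u})=\chi(K)$.

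This last identity is the step I expect to be the real work, and I would deduce it from the stronger statement $\Sh(W_{S}^{u})=\Sh(K)$. Working in the standard block we have $W_{S}^{u}=N^{-}$ with $K\subset N^{-}$, and I would apply the shape criterion recalled in the introduction to the pair $(N^{-},K)$ using the backward-flow maps $g_{k}(x)=x(-k)$. These map $N^{-}$ into itself and $K$ into itself, each $g_{k}$ is homotopic to $\id_{N^{-}}$ (and $g_{k}|_{K}$ to $\id_{K}$) through the flow homotopy $x\mapsto x(-sk)$, $s\in[0,1]$, and the crux is condition~(1): for every neighborhood $U$ of $K$ in $N^{-}$ one needs a time $s_{0}$ with $x(-t)\in U$ for all $x\in N^{-}$ and all $t\geq s_{0}$, which then yields $g_{k}\simeq g_{k+1}$ in $U$ for $k\geq s_{0}$. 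This uniform backward attraction is proved by the same compactness-and-contradiction argument used in the proof of Theorem~\ref{thm:1} for $N^{+}$: a failing sequence $x_{n}(-t_{n})\to y\in N^{-}\setminus U$ with $t_{n}\to\infty$ forces the full orbit of $y$ to lie in $N$ (the positive semiorbit because $x_{n}(s-t_{n})\in N$ for $t_{n}>s$, the negative one because $y\in N^{-}$), whence $y\in K\subset U$, a contradiction. The criterion then gives $\Sh(N^{-})=\Sh(K)$, so the two compacta share \v Cech cohomology and hence Euler characteristic; this simultaneously settles the finite-type hypotheses, since $K$ is of finite type by the standing assumption and $(N,L)$ is a pair of compact manifolds.

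Assembling the chain $\deg(F,N)=(-1)^{n}\chi(N,L)=(-1)^{n}(\chi(W_{S}^{u})-\chi(S))=(-1)^{n}(\chi(K)-\chi(S))$ proves (\ref{eq:1}). Finally, (\ref{eq:2}) requires no further argument: the second assertion of Theorem~\ref{thm:SrMc} gives $I(F_{|_{N}})=(-1)^{n}\chi(h(K))$ whenever the index is defined, so the identical chain of equalities applies verbatim.
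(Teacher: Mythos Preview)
Your proof is correct and follows essentially the same route as the paper: reduce via Theorem~\ref{thm:SrMc} to computing $\chi(N,L)$, use Theorem~\ref{thm:1} and strong excision to identify this with $\chi(W_{S}^{u},S)$, and then show $\Sh(N^{-})=\Sh(K)$ by an argument parallel to that of Theorem~\ref{thm:1}. You actually supply more detail than the paper on this last step (the paper simply says ``by an argument similar to that used in the proof of Theorem~\ref{thm:1}''), and your explicit use of the backward-flow maps $g_{k}(x)=x(-k)$ with the compactness--contradiction argument is exactly what is needed.
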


\begin{proof}
First observe that, if $L$ is the exit set of $N$ we have that $\chi(N,L)$ is defined and 
\[
\chi(S(K))=\chi(N,L)=\chi(h(K)).
\]
As before, we can assume that $W_{S}^{u}(K)=N^{-}$ and $S=n^{-}.$ By an argument similar to that used
in the proof of the Theorem~\ref{thm:1}, it is easy to see that $\Sh(N^{-})=\Sh(K)$. Then $%
\check{H}^{r}(N^{-})=\check{H}^{r}(K)$ for every $r$ and, thus, $\chi
(N^{-})=\chi (K).$ Moreover, Theorem~\ref{thm:1} ensures that $S(K)=\Sh(N^{-}/n^{-},\ast )$. Hence, as a consequence of the strong excision property of \v Cech cohomology we obtain that
\[
\chi(N,L)=\chi(N^-,n^-)=\chi(N^-)-\chi(n^-)=\chi(K)-\chi(S).
\] 
Notice that, since $\chi(N^-,n^-)$ and $\chi(N^-)$ are defined so is $\chi(n^-)$ (hence, $\chi(S)$).

The equalities \eqref{eq:1} and \eqref{eq:2} follow from Theorem~\ref{thm:SrMc}.
\end{proof}

This corollary allows in many cases to establish a direct relation of the Brouwer degree and the total index with the topology of the invariant set $K$, as we show in various results of the paper. In other cases we also need some knowledge of the initial section of the unstable manifold, which is often easy to compute. This gives an alternative method
to the one provided by Theorem~\ref{thm:SrMc} that has some advantages in certain cases. 

A direct consequence of Corollary~\ref{cor:genPoinH} is a particular case of a result obtained by Sredznicki \cite[Lemma~6.2]{SrFM}.

\begin{corollary}
If $K$ is a continuum in $\mathbb{R}^{2}$ then $\deg (F,N)\leq
\chi (K)$. Also $I(F_{|_N})\leq \chi (K)$ when $I(F_{|_N})$ is defined.
\end{corollary}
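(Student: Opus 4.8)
The plan is to apply Corollary~\ref{cor:genPoinH} directly, specializing to the case $n=2$, and then argue that the Euler characteristic of an initial section of a planar continuum cannot be negative. From \eqref{eq:1} with $n=2$ we have $\deg(F,N)=\chi(K)-\chi(S)$, where $S$ is an initial section of the truncated unstable manifold $W^u(K)\setminus K$. Thus to obtain $\deg(F,N)\le\chi(K)$ it suffices to show that $\chi(S)\ge 0$. The same inequality for $I(F_{|_N})$ follows identically from \eqref{eq:2} once this is established, so the whole argument reduces to bounding $\chi(S)$ from below.

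\medskip

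First I would identify the nature of $S$. Since we may take $S=n^-=N^-\cap L$, the section $S$ is a compact subset of the exit set $L\subset\partial N$, which (for a smooth isolating block in the plane) is a compact $1$-dimensional manifold, i.e. a finite disjoint union of arcs and circles. Consequently $S$ is homotopy equivalent to a finite disjoint union of points and circles. For such a space, $\check H^0(S)$ is free of rank equal to the number of connected components and $\check H^1(S)$ is free of rank equal to the number of circle components, with all higher cohomology vanishing. Hence
\[
\chi(S)=\rk\check H^0(S)-\rk\check H^1(S)=(\#\text{components})-(\#\text{circles}).
\]
Since each circle is itself one component, every circle contributes $+1$ to the first term and $-1$ to the second, cancelling; each arc (or point) component contributes $+1$ to the first term and $0$ to the second. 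Therefore $\chi(S)$ equals the number of arc-type (contractible) components, which is nonnegative. This gives $\chi(S)\ge 0$ and completes the argument.

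\medskip

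The main obstacle I anticipate is making rigorous the claim that the relevant initial section is genuinely a $1$-manifold whose components are only arcs and circles, so that no component can have negative Euler characteristic. In dimension $2$ this is transparent because a compact $1$-manifold has no component with $\chi<0$; the general principle at work is simply that any compact space of covering dimension $\le 1$ has $\check H^k=0$ for $k\ge 2$ and $\rk\check H^1\le\rk\check H^0$ whenever each component is either acyclic or a circle. To keep the proof clean I would invoke the smoothness convention stated in the introduction, under which $L$ is an $(n-1)=1$-dimensional submanifold of $\partial N$, and take $S$ to be the compact section $n^-$ inside $L$; its Euler characteristic is then manifestly $\ge 0$ by the classification of compact $1$-manifolds. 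One should remark that this recovers precisely Srzednicki's \cite[Lemma~6.2]{SrFM} inequality, and that the planarity enters only through the sign $(-1)^n=(-1)^2=1$ and through the low dimension of the section.
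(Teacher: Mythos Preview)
Your overall strategy matches the paper's: apply Corollary~\ref{cor:genPoinH} with $n=2$ to get $\deg(F,N)=\chi(K)-\chi(S)$ and then show $\chi(S)\ge 0$. The difference lies in how the structure of $S$ is established.

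There is a gap in your argument at the step ``Consequently $S$ is homotopy equivalent to a finite disjoint union of points and circles.'' You justify this by noting that $n^-=N^-\cap L$ is a compact subset of the $1$-manifold $L$, and then in your final paragraph you invoke ``the classification of compact $1$-manifolds.'' But $n^-$ is merely a \emph{closed subset} of $L$; nothing in the smoothness convention for isolating blocks forces $N^-\cap L$ to be a submanifold. A priori a compact subset of an arc could be a Cantor set, which is not of finite type and not a finite union of points. So the classification of compact $1$-manifolds does not apply directly, and the implication you draw is not valid as stated.

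The paper handles this by citing \cite{BSJMA}, where it is proved that for $n=2$ an initial section is a finite disjoint union of circles and (possibly degenerate) intervals; from that, $\chi(S)\ge 0$ is immediate. Your argument can in fact be repaired without that citation, but it requires an extra step you omitted: from the proof of Corollary~\ref{cor:genPoinH} one knows $\chi(S)$ is defined, so $\check H^0(S)$ has finite rank and hence $S$ has only finitely many connected components; each such component, being a compact connected subset of an arc or a circle, must itself be a point, an arc, or the full circle. Only after this observation does your Euler-characteristic count go through. As written, the jump from ``compact subset of a $1$-manifold'' to ``finite union of arcs and circles'' is the missing idea.
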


\begin{proof}
In \cite{BSJMA} it was shown that for $n=2$ the section $S$ is a disjoint finite union
of circles and (possibly degenerate) topological intervals and, consequently, $\chi (S)\geq
0.$ Then $\deg (F,N)=\chi (K)-\chi (S)\leq \chi (K)$.
\end{proof}

The following corollary deals with the important particular case of Theorem~\ref{cor:genPoinH} when the initial part of the unstable manifold \textbf{is a genuine manifold} whose
boundary is the initial section $S$.

\begin{proposition}\label{prop:unman}
Suppose that $W_{S}^{u}$ is an $m$-dimensional manifold with boundary $%
\partial W_{S}^{u}=S$. Then
\[
\deg (F,N)=(-1)^{(n+m)}\chi(K).
\]
In particular, $\deg (F,N)$ agrees with $\chi (K)$ if the
parities of $n$ and $m$ are the same and with $-\chi (K)$ otherwise.
The same statement is valid for $I(F_{|_N})$ when  defined.
\end{proposition}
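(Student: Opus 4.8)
The plan is to start from the identity furnished by Corollary~\ref{cor:genPoinH}, namely $\deg(F,N)=(-1)^{n}(\chi(K)-\chi(S))$, and to eliminate $\chi(S)$ in favor of $\chi(K)$ using only the hypothesis that $W_{S}^{u}$ is an $m$-manifold with $\partial W_{S}^{u}=S$. Two ingredients are needed. First, $\chi(W_{S}^{u})=\chi(K)$: this is already contained in the proof of Corollary~\ref{cor:genPoinH}, where it is shown that $\Sh(N^{-})=\Sh(K)$, hence $\check{H}^{r}(N^{-})=\check{H}^{r}(K)$ for every $r$, and we may identify $W_{S}^{u}$ with $N^{-}$. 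Second, $S$, being the boundary of a compact manifold, is itself a closed (compact, boundaryless) manifold of dimension $m-1$.

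Next I would record the classical fact that every closed odd-dimensional manifold has vanishing Euler characteristic (Poincar\'e duality, which may be taken with $\mathbb{Z}/2$ coefficients so that no orientability hypothesis is required). I would then split according to the parity of $m$. If $m$ is even, then $S$ is a closed manifold of odd dimension $m-1$, so $\chi(S)=0$ outright; substituting into \eqref{eq:1} gives $\deg(F,N)=(-1)^{n}\chi(K)=(-1)^{n+m}\chi(K)$.

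If $m$ is odd, I would instead form the double $DW_{S}^{u}=W_{S}^{u}\cup_{S}W_{S}^{u}$, a closed manifold of odd dimension $m$, whose Euler characteristic therefore vanishes. An inclusion--exclusion (Mayer--Vietoris) computation gives $\chi(DW_{S}^{u})=2\chi(W_{S}^{u})-\chi(S)$, so that $\chi(S)=2\chi(W_{S}^{u})=2\chi(K)$. Substituting into \eqref{eq:1} yields $\deg(F,N)=(-1)^{n}(\chi(K)-2\chi(K))=(-1)^{n+1}\chi(K)=(-1)^{n+m}\chi(K)$. The two cases package uniformly as $\chi(S)=(1-(-1)^{m})\chi(K)$, whence $\deg(F,N)=(-1)^{n}(-1)^{m}\chi(K)$ in every case. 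The statement for the index $I(F_{|_N})$ then follows verbatim from equation~\eqref{eq:2} of the corollary.

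The one point requiring care, and the step I would watch most closely, is the possible non-orientability of $W_{S}^{u}$: the doubling formula $\chi(DW)=2\chi(W)-\chi(\partial W)$ is purely combinatorial and needs no orientation, and the vanishing of $\chi$ for a closed odd-dimensional manifold holds over $\mathbb{Z}/2$, so both cases go through regardless of orientability. An alternative to the doubling argument is Poincar\'e--Lefschetz duality, which yields $\chi(W_{S}^{u},S)=(-1)^{m}\chi(W_{S}^{u})$ and hence, combined with $\chi(W_{S}^{u})=\chi(W_{S}^{u},S)+\chi(S)$, the same relation $\chi(S)=(1-(-1)^{m})\chi(K)$ in one stroke; this route is cleanest when orientability is available, whereas the doubling argument is robust without it.
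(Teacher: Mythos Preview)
Your proof is correct and follows essentially the same route as the paper: both start from Corollary~\ref{cor:genPoinH}, use $\chi(W_{S}^{u})=\chi(K)$, split on the parity of $m$, and obtain $\chi(S)=0$ for $m$ even and $\chi(S)=2\chi(K)$ for $m$ odd. The only minor difference is that for $m$ odd the paper invokes Lefschetz duality directly (your stated alternative), whereas your primary argument uses the doubling trick; your version has the added virtue of being explicit about why no orientability hypothesis is needed.
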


\begin{proof}
Since by Corollary~\ref{cor:genPoinH} 
\[
\deg(F,N)=(-1)^n(\chi(K)-\chi(S)),
\]
we only have to compute $\chi(S)$. Taking into account that $W^u_S(K)$ is a genuine $m$-manifold whose boundary is $S$, it follows that $S$ is a closed $(m-1)$-manifold. If $m$ is even, then $m-1$ is odd and Poincar\'e duality ensures that $\chi(S)=0.$ On the other hand, if $m$ is odd, Lefschetz duality, together with the fact that the $\Sh(W^u_S)=\Sh(K)$ ensure that 
\[
\chi(W^u_S,S)=-\chi(W^u_S)=-\chi(K).
\]  
As a consequence, $\chi(S)=2\chi(K)$ and the result follows.
\end{proof}

The classical Poincar\'{e}-Hopf Theorem is the most important particular
case of Proposition~\ref{prop:unman}:

\begin{corollary}\label{coro:outcar}
If $F$ points outward in $\partial N$ and $I(F_{|_N})$ is defined then $I(F_{|_N})=\chi (N)$. The same holds for $\deg(F,N)$. In this case there is no requirement for $I(F_{|_N})$ to be defined.
\end{corollary}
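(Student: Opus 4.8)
The plan is to recognize Corollary~\ref{coro:outcar} as the borderline case $m=n$ of Proposition~\ref{prop:unman}, in which the initial part of the unstable manifold fills up the entire isolating block. First I would unpack the hypothesis. Since $F$ points outward along $\partial N$, every boundary point is an exit point and there are no tangencies, so the exit set is the whole boundary, $L=\partial N$, while the entrance set $L'$ and the tangency set $L\cap L'$ are empty. Dually, the outward-pointing condition makes $N$ negatively invariant: for every $x\in N$ the backward semitrajectory $x(-\infty,0]$ stays inside $N$. Consequently the negative asymptotic set coincides with the whole block, $N^{-}=N$, and the initial section is $n^{-}=N^{-}\cap L=\partial N$.

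With this identification the initial part of the unstable manifold $W_{S}^{u}=N^{-}=N$ is a genuine $n$-dimensional manifold whose boundary is exactly $S=\partial N$. Thus we land precisely in the hypothesis of Proposition~\ref{prop:unman} with $m=n$, which gives
\[
\deg(F,N)=(-1)^{n+n}\chi(K)=\chi(K).
\]
To replace $\chi(K)$ by $\chi(N)$ I would invoke the shape equivalence $\Sh(N^{-})=\Sh(K)$ established in the proof of Corollary~\ref{cor:genPoinH}; since here $N^{-}=N$ as sets, this yields $\chi(N)=\chi(N^{-})=\chi(K)$, whence $\deg(F,N)=\chi(N)$. The statement for the total index then follows from the additivity of the degree: whenever $F_{|_N}$ has only finitely many singular points, $I(F_{|_N})=\deg(F,N)=\chi(N)$, while the degree identity itself requires no such finiteness assumption, which is exactly the final remark in the statement.

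The step I expect to require the most care is establishing $N^{-}=N$, that is, the negative invariance of $N$ under the outward-pointing hypothesis. The argument is a transversality bookkeeping: if some interior point $x\in\mathring{N}$ had a backward trajectory meeting $\partial N$, let $t_{0}<0$ be the last such time before $0$, so that $x t\in\mathring{N}$ for all $t\in(t_{0},0]$; then the outward condition at $x t_{0}\in\partial N$ forces $x(t_{0}+\varepsilon)\notin N$ for small $\varepsilon>0$, contradicting $x(t_{0}+\varepsilon)\in\mathring{N}$. Hence no backward semitrajectory of an interior point can reach $\partial N$, and $N^{-}=N$. Once this is in place the rest is a direct specialization of Proposition~\ref{prop:unman}, with the pleasant feature that the sign $(-1)^{n+m}$ collapses to $+1$ because $m=n$, so no parity distinction is needed.
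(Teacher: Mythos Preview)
Your proposal is correct and follows essentially the same route as the paper: recognize that the outward-pointing hypothesis makes $N$ negatively invariant, take $W_{S}^{u}=N$ with $S=\partial N$ so that $m=n$, apply Proposition~\ref{prop:unman}, and then use $\check{H}^{*}(N^{-})\cong\check{H}^{*}(K)$ to replace $\chi(K)$ by $\chi(N)$. The only difference is that you spell out the negative-invariance argument in more detail than the paper does.
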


\begin{proof}
Since $F$ points outwards in $\partial N$ it follows that the maximal invariant set contained in $N$ must be a repeller. Moreover, $N$ is a negatively invariant neighorbood of $K$ and, as a consequence, we can take $%
W_{S}^{u}=N$, $S=\partial N,$ and $n=m$. Since $\check{H}^*(
W_{S}^{u})=\check{H}^*(K)$ the result follows from Proposition~\ref{prop:unman}.  
\end{proof}

The following result refers to flows that have a global repeller.

\begin{corollary}
If $\varphi$ has a global repeller $K$ then $\deg (F,N)=1$ for every
isolating block containing $K$ and $I(F_{|_N})=1$ when defined.
\end{corollary}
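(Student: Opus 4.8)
The plan is to derive this as a direct consequence of Corollary~\ref{coro:outcar} (the Poincar\'e--Hopf form just established). The key observation is that if $K$ is a \emph{global} repeller, then every compact isolated invariant set near $K$ is attracted backward in time to $K$, and one can arrange the isolating block so that the vector field points outward everywhere on its boundary. First I would note that a global repeller is, in particular, a repeller, so there is an isolating block $N$ for $K$ on which the flow is negatively invariant; the exit set is $L = \partial N$ and the entrance set is $L' = \emptyset$. This is exactly the situation of Corollary~\ref{coro:outcar}, where $F$ points outward on all of $\partial N$.

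Next I would invoke Corollary~\ref{coro:outcar} to conclude that $\deg(F,N) = \chi(N)$, and likewise $I(F_{|_N}) = \chi(N)$ when the index is defined. So the whole statement reduces to computing the single number $\chi(N)$ and showing it equals $1$. The natural route is to identify the homotopy (or shape) type of $N$. Since $K$ is a \emph{global} repeller, its complement retracts to infinity and $N$ can be taken to be a large ball-like region; more precisely, because $\omega^*(x)\subset K$ for \emph{every} $x\in\mathbb{R}^n$, the flow run backward carries all of $\mathbb{R}^n$ into any neighborhood of $K$, which forces the isolating block $N$ to be a deformation retract of $\mathbb{R}^n$ (equivalently, $\mathbb{R}^n\setminus\mathring{N}$ flows into $N$). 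Hence $\check{H}^*(N)\cong\check{H}^*(\mathbb{R}^n)$, and $\mathbb{R}^n$ is contractible, giving $\chi(N) = 1$.

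The step I expect to be the main obstacle is justifying rigorously that the isolating block $N$ for a global repeller is contractible (or at least has the Euler characteristic of a point). The subtlety is that ``global repeller'' only controls the backward dynamics of individual points, and one must argue that $N$ can be chosen as a compact, positively-unbounded-complement region whose complement is swept into $N$ by the backward flow, yielding $\mathbb{R}^n$ as a deformation of $N$. I would handle this by choosing $N$ to be an isolating block large enough that $\mathbb{R}^n\setminus\mathring N$ lies in the basin where $\omega^*\subset K$ and then using the flow to build an explicit strong deformation retraction of $\mathbb{R}^n$ onto $N$, so that $\chi(N)=\chi(\mathbb{R}^n)=1$. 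Once this is in place, combining it with Corollary~\ref{coro:outcar} yields $\deg(F,N)=1$ unconditionally and $I(F_{|_N})=1$ whenever the index is defined, which is exactly the claimed conclusion.
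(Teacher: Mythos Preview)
Your proposal is correct and follows essentially the same route as the paper: pick a negatively invariant isolating block so that $F$ points outward on $\partial N$, apply Corollary~\ref{coro:outcar} to get $\deg(F,N)=\chi(N)$, and then argue $\chi(N)=1$; the independence of the degree on the choice of isolating block (implicit in Theorem~\ref{thm:SrMc}) then gives the statement for \emph{every} block. The only difference is in the justification of $\chi(N)=1$: the paper invokes \cite[Theorem~3.6]{KapRod} to conclude that the global repeller $K$ has the \v Cech cohomology of a point and then uses $\chi(N)=\chi(K)$, whereas you retract $\mathbb{R}^n$ onto $N$ along the reverse flow to get $\chi(N)=\chi(\mathbb{R}^n)$. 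These are dual versions of the same idea (the backward flow gives a deformation retraction either of $N$ onto $K$ or of $\mathbb{R}^n$ onto $N$); your version is self-contained, while the paper's packages the analytic step into a citation.
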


\begin{proof}
If $K$ is the global repeller of $\varphi$ then $K$ has the \v Cech cohomology groups of a point by \cite[Theorem~3.6]{KapRod}. Since the degree does not depend on the choice of the isolating block, we may assume that $N$ is  negatively invariant, i.e, such that $N=N^-$. Hence, $\chi(N)=\chi(K)=1$ and the result follows from Corollary~\ref{coro:outcar}.
\end{proof}

In dimension 2 we obtain a kind of reciprocal to the Poincar\'{e}-Hopf
theorem and a nice characterization of the flows such that $I(F_{|_N})=\chi (N)$.

\begin{proposition}\label{prop:tangent}
If $K$ is a continuum in $\mathbb{R}^{2}$ and $I(F_{|_N})$ is defined then the vector field $F$ is tangent to $\partial N$ in exactly  $2(\chi (N)-I(F_{|_N}))$ points. As a consequence, $I(F_{|_N})=\chi(N)$ if and only
if $F$ either points outward or inward in every component of $%
\partial N$. 
\end{proposition}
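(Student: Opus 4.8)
The plan is to count the tangency points directly and then to recognise the resulting Euler characteristic as $I(F_{|_N})$ through Theorem~\ref{thm:SrMc}. Since $n=2$ and $N$ is a compact $2$-manifold with boundary, $\partial N$ is a disjoint union of circles and $L,L'$ are compact $1$-submanifolds of $\partial N$ with $L\cap L'=\partial L=\partial L'$; by the standing description of isolating blocks this common set is precisely the finite set $T$ of points where $F$ is tangent to $\partial N$. Thus it suffices to show that $|T|=2(\chi(N)-I(F_{|_N}))$.

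First I would analyse each boundary circle. On a circle $C$ that meets $T$, the points of $C\cap T$ cut $C$ into arcs, and these arcs must alternate between $L$ and $L'$: each point of $T$ lies in $\partial L=\partial L'$, so it is simultaneously an endpoint of an arc of $L$ and of an arc of $L'$, which forbids two consecutive arcs of the same type. Consequently $C$ carries an even number $2m_C$ of tangency points and exactly $m_C$ arcs of $L$. A circle disjoint from $T$ lies entirely in $L$ or entirely in $L'$ (its intersections with the closed sets $L$ and $L'$ are disjoint and cover it) and contributes a full circle component. Since an arc has Euler characteristic $1$ and a circle $0$, summing over components gives $\chi(L)=\sum_C m_C$, whereas $|T|=\sum_C 2m_C$; hence $|T|=2\chi(L)$.

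Next I would rewrite $\chi(L)$ in terms of the data in the statement. As $N$, $L$ and $(N,L)$ are all of finite type, additivity of the Euler characteristic gives $\chi(L)=\chi(N)-\chi(N,L)$. The pair $(N,L)$ computes the cohomological Conley index, so $\chi(N,L)=\chi(h(K))$, and Theorem~\ref{thm:SrMc} with $n=2$ yields $I(F_{|_N})=(-1)^2\chi(h(K))=\chi(N,L)$. Substituting, $\chi(L)=\chi(N)-I(F_{|_N})$ and therefore $|T|=2(\chi(N)-I(F_{|_N}))$.

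For the stated consequence, note that $I(F_{|_N})=\chi(N)$ holds exactly when $|T|=0$, i.e. when $L\cap L'=\emptyset$. In that case no boundary circle contains a tangency point, so on each component of $\partial N$ the sign of the normal component of $F$ is constant and nonzero; thus $F$ points outward on the components contained in $L$ and inward on those contained in $L'$, and conversely such transversality forces $T=\emptyset$. I expect the only genuinely non-bookkeeping step to be the alternation argument on each circle, which must be extracted carefully from $L\cap L'=\partial L=\partial L'$; once $|T|=2\chi(L)$ is in hand, the identification $\chi(N,L)=I(F_{|_N})$ is immediate from Theorem~\ref{thm:SrMc}.
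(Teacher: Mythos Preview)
Your proof is correct and follows essentially the same route as the paper: identify the tangency set with $\partial L$, show $|\partial L|=2\chi(L)$, and then use Theorem~\ref{thm:SrMc} together with $\chi(N)=\chi(N,L)+\chi(L)$. The only difference is that your alternation argument on each boundary circle is more work than needed; the paper simply observes that $L$ is a compact $1$-manifold, hence a disjoint union of circles and closed intervals, so $|\partial L|$ is twice the number of interval components, which is exactly $2\chi(L)$.
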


\begin{proof}
The first part of the statement follows from the fact that the points of tangency are exactly the points of $L\cap L'=\partial L$, where $L'$ is the entrance set. Since $L$ is a compact $1$-dimensional manifold, it is a disjoint union of circles and closed intervals. Hence, $\partial L$ consists of the endpoints of each interval component of $L$. Since each interval has exactly two endpoints and $\chi(L)$ is just a count of the number of interval components of $L$,  the result follows from Theorem~\ref{thm:SrMc}. The second part of the statement is a direct consequence of this discussion.
\end{proof}

\begin{example}

Let $\varphi$ be a flow in $\mathbb{R}^2$ induced by a vector field $F$ and suppose that $K$ is an isolated periodic trajectory. Then, it is not difficult to see that $K$ admits an isolating block $N$ that is a closed annulus with two different boundary components, each contained in a different component of $\mathbb{R}^2\setminus K$. Since $K$ does not contain fixed points, then
\[
I(F_{|_N})=0=\chi(N),
\]
and, Proposition~\ref{prop:tangent} ensures that the vector field points either outward or inward in every component of $\partial N$. Hence, we have three mutually exclusive possibilities:
\begin{enumerate}
\item $F$ points inward in both components of $\partial N$ and, hence, $K$ is an attractor.
\item $F$ points outward in both components of $\partial N$ and, hence, $K$ is a repeller.
\item $F$ points inward in one component of $\partial N$ and outward in the other. In this case $K$ is neither an attractor nor a repeller. 
\end{enumerate}
Although these three possibilities cannot be distinguished only using the index, they can be distinguished by using the Conley index. Indeed, in the first case, $L=\emptyset$ and, hence, the effect of collapsing is equivalent to make the disjoint union of $N$ with a point $\{*\}$ not belongin to $N$. Since $N$ is an annulus, it has de homotopy type of the circle $S^1$ and, hence, the Conley index of $K$  is the pointed homotopy type of $(S^1\cup\{*\},*)$ where $*$ is a point that does not belong to $S^1$. 

In the second case $L=\partial N$ and $(N/L,[L])$ is a pinched torus that is pointed homotopy equivalent to the wedge $(S^2\vee S^1,*)$. 

Finally, in the third case, $L$ is just one component of $\partial N$. Since $N$ is homeomorphic to the product $S^1\times[0,1]$, $(N/L,[L])$ is just the cone $(CS^1,*)$ that is contractible. It follows that the Conley index of $K$ is trivial.

This shows that the Conley index is a finer invariant than the index of a vector field.  
\end{example}

\section{Brouwer degree of vector fields near non-saddle sets}

In this section we study the Brouwer degree of a vector field in a vicinity of a special class of isolated invariant sets called non-saddle. 

We start by recalling that an invariant set $K$ is said to be \emph{non-saddle} if it satisfies that for every neighborhood $U$
of $K$ there exists a neighborhood $V$ of $K$ such that for all $x\in V$ either $x[0,+\infty)\subset U$ or $x(-\infty,0]\subset U$. Otherwise $K$ is said to be \emph{saddle}. We shall only consider non-saddle sets that are also isolated. Attractors, repellers and unstable attractors with mild forms of instability are some examples of non-saddle sets. Isolated non-saddle sets are characterized by possesing arbitrarily small isolating blocks of the form $N=N^+\cup N^-$ (see \cite[Proposition~3]{BSdis}). Moreover, if $K$ is connected, every isolating block is, in fact, of this form. Notice that if $N$ is an isolating block of this form, the vector field points either inward or outward in each connected component of $\partial N$. Using the homotopies provided by the flow, it easily follows that $\check{H}^*(K)\cong H^*(N)$ and, therefore, $K$ is of finite type. Another property that we shall use in the sequel is that the union of the components of the boundary of an isolating block of the form $N=N^+\cup N^-$ in which the vector field points outward is an initial section of the unstable manifold of $K$. In an analogous way, the union of thoose components of $\partial N$ in which the flow points inward is a final section of the stable manifold. Hence, for isolated non-saddle sets of smooth flows on $\mathbb{R}^n$ initial and final sections of the unstable and stable manifolds are closed manifolds of dimension $n-1$. For more information on isolated non-saddle sets the reader can see \cite{Bdcds, BSbif, BSdis}.

In view of this, the following result is a far-reaching
generalization of the Poincar\'{e}-Hopf Theorem. Furthermore, it provides a
nice characterization of non-saddle sets for flows in the plane.

\begin{proposition}\label{prop:ns1}
 Suppose that $K$ is a non-saddle continuum, $N$ is a connected isolating block
of $K$ and $S$ and $S^*$ an initial and a final section of the truncated unstable and stable manifolds of $K$ respectively. Suppose also that $I(F_{|_N})$ is defined. Then,
\begin{enumerate}
\item  If the dimension $n$ is even
then $I(F_{|_N})=\chi (K)=\chi (N)$.
\item If $n$ is odd then $I(F_{|_N})=\frac{1}{2}%
(\chi (S^{\ast })-\chi (S))=-\chi (N)+\chi (S^{\ast })$ (note that if $K$ is
a repeller then $\chi (S^{\ast })=0$ and, therefore, $I(F_{|_N})=-\chi (N)$).
\end{enumerate}
Moreover, if $n=2$ and $K$ is an arbitrary isolated invariant continuum then $I(F_{|_N})=\chi (K)$ if and only if $K$ is non-saddle.

An analogous statement is valid for $\deg (F,N)$. In this case there is no
requirement that $I(F_{|_N})$ be defined.
\end{proposition}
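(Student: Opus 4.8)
The plan is to reduce the whole statement to Corollary~\ref{cor:genPoinH}, which already yields $I(F_{|_N})=(-1)^{n}(\chi(K)-\chi(S))$ (and the identical formula for $\deg(F,N)$, with no hypothesis on the zeros of $F$), and then to evaluate $\chi(S)$ using the special structure of a connected non-saddle block. The essential geometric input is that for such a $K$ the boundary splits as a disjoint union $\partial N=S\sqcup S^{\ast}$ of two \emph{closed} $(n-1)$-manifolds, the exit components forming $S$ and the entrance components forming $S^{\ast}$, and that $\chi(N)=\chi(K)$ because $\check{H}^{\ast}(K)\cong H^{\ast}(N)$ for non-saddle blocks, as recalled before the statement.

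First I would dispose of the even-dimensional case: here $S$ is a closed manifold of odd dimension $n-1$, so Poincar\'e duality forces $\chi(S)=0$, and Corollary~\ref{cor:genPoinH} gives $I(F_{|_N})=\chi(K)=\chi(N)$ immediately. For $n$ odd the characteristics of $S$ and $S^{\ast}$ need not vanish, so instead I would relate them through the block. Doubling $N$ along its boundary produces a closed $n$-manifold of odd dimension, whose Euler characteristic vanishes; since $\chi(\text{double})=2\chi(N)-\chi(\partial N)$ this gives $\chi(S)+\chi(S^{\ast})=\chi(\partial N)=2\chi(N)=2\chi(K)$. Feeding this relation into $I(F_{|_N})=(-1)^{n}(\chi(K)-\chi(S))$ and simplifying produces the stated expressions in terms of $\chi(S^{\ast})$ and $\chi(N)$, the repeller case being the specialization $S^{\ast}=\emptyset$. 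The delicate point in this step is the sign bookkeeping, since for odd $n$ one has $(-1)^{n}=-1$ and it must be combined correctly with the duality relation.

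For the planar characterization I would start again from Corollary~\ref{cor:genPoinH}, now reading $I(F_{|_N})=\chi(K)-\chi(S)$. By the discussion following Theorem~\ref{thm:1}, for $n=2$ the section $S$ is a disjoint union of circles and (possibly degenerate) arcs, so $\chi(S)\ge 0$ with equality exactly when $S$ is a closed $1$-manifold. Hence $I(F_{|_N})=\chi(K)$ if and only if $\chi(S)=0$. To translate $\chi(S)=0$ into a dynamical statement I would invoke Proposition~\ref{prop:tangent}: combining its tangency count $2(\chi(N)-I(F_{|_N}))$ with $I(F_{|_N})=\chi(K)-\chi(S)$ and the identity $\chi(N)=\chi(K)$ shows that $F$ is tangent to $\partial N$ in exactly $2\chi(S)$ points. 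Thus $\chi(S)=0$ is equivalent to the absence of boundary tangencies, which for a connected continuum is precisely the condition that every isolating block has the form $N^{+}\cup N^{-}$, that is, that $K$ is non-saddle.

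The version for $\deg(F,N)$ needs no new idea: each step above used only the degree half of Corollary~\ref{cor:genPoinH} and the degree form of Proposition~\ref{prop:tangent}, both consequences of Theorem~\ref{thm:SrMc} that hold with no finiteness assumption on the zeros of $F$, so the formulas and the equivalence persist verbatim with $\deg(F,N)$ replacing $I(F_{|_N})$. I expect the genuine obstacle to be the backward direction of the planar equivalence, where one must deduce that $K$ is non-saddle from the purely numerical condition $\chi(S)=0$. This rests on the identity $\chi(N)=\chi(K)$ for an \emph{arbitrary} planar isolated invariant continuum---which, unlike the non-saddle case, is not immediate and should be established via Alexander duality, identifying $\rk\check{H}^{1}(K)$ with the number of bounded complementary components shared by $K$ and its block---together with the structural fact that a connected continuum is non-saddle exactly when its blocks carry no boundary tangencies.
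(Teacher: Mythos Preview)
Your treatment of parts (1) and (2) is correct and matches the paper's argument almost exactly. The only variation is in the odd case: you obtain $\chi(S)+\chi(S^{\ast})=2\chi(N)$ by doubling $N$ to a closed odd-dimensional manifold, whereas the paper applies Lefschetz duality to the pair $(N,\partial N)$. These are equivalent standard arguments, and either suffices.

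For the planar characterization, however, your route is more circuitous than needed and rests on two auxiliary claims that the paper does not use. Once you have $I(F_{|_N})=\chi(K)\Leftrightarrow\chi(S)=0$ from Corollary~\ref{cor:genPoinH}, the paper finishes in one line: since $S$ is a finite disjoint union of circles and (possibly degenerate) arcs, $\chi(S)=0$ forces every component of $S$ to be a circle, and \cite[Theorem~10]{BSJMA} states precisely that $K$ is non-saddle if and only if all components of $S$ are circles. There is no need to pass through the tangency count of Proposition~\ref{prop:tangent}, and therefore no need for the identity $\chi(N)=\chi(K)$ for an \emph{arbitrary} planar isolated continuum. That identity is genuinely nontrivial in general (it is automatic in the non-saddle case because $\check H^{*}(K)\cong H^{*}(N)$, but for a saddle set one must rule out isolating blocks with spurious holes), and your ``structural fact'' that absence of boundary tangencies characterizes non-saddleness for connected continua is not the statement of \cite[Theorem~10]{BSJMA} (that result is phrased in terms of the initial section $S=n^{-}$, not the full exit set $L$) and would itself require justification. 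So your plan can likely be completed, but it manufactures two extra lemmas where the paper needs none; the direct appeal to \cite[Theorem~10]{BSJMA} is both shorter and avoids the obstacle you yourself flag.
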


\begin{proof}
We may assume without loss of generality that $S=n^-$ and $S^*=n^+$. If $n$ is even, since $S$ is a closed manifold of odd dimension it follows that $\chi (S)=0$ and, hence, $I(F_{|_N})=\chi (K)=\chi (N).$ 

Suppose now that $n$ is odd. Taking into account that $\partial N=S\cup S^*$, Lefschetz duality applied to the pair $(N,\partial N)$ yields
\[
H^{\ast }(N,S\cup S^{\ast })=H_{n-\ast }(N), 
\]%
where the homology and the cohomology are taken in dual dimensions relative
to $n.$

Since $n$ is odd we deduce from the former expression that 

\[
\chi (N,S\cup S^{\ast })=-\chi (N).
\]
On the other hand, 
\[
\chi (N,S\cup S^{\ast })=\chi (N)-\chi (S)-\chi (S^{\ast
}).
\]
Summing up, 
\[
\chi (N)=\frac{1}{2}(\chi (S)+\chi (S^{\ast })). 
\]

Since $H^{\ast }(N)\cong\check{H}^{\ast }(K)$ we have that $\chi(N)=\chi(K)$ and, using this fact in the formula from Corollary~\ref{cor:genPoinH} we get that

\[
I(F_{|_N})=\frac{1}{2}(\chi (S^*)-\chi (S))=-\chi
(N)+\chi (S^{\ast }). 
\]

Now suppose that $n=2$ and $K$ is an arbitrary isolated invariant
continuum. We only have to see that the equality $I(F_{|_N})=\chi (K)$ ensures the non-saddleness of $K$ since the converse statement is just case (1). By (\cite[Theorem~10]{BSJMA}), $K$ is non-saddle if and only if
all the components of $S$ are circles .The equality $I(F_{|_N})=\chi (K)$ implies that $%
\chi (S)=0$ and, thus, in this case, no component of $S$ can be a (possibly degenerate) topological interval
and, consequently, must be a circle. Therefore, $K$ is non-saddle. 
\end{proof}

In the following result, we use the Alexandrov (or one-point)
compactification of the Euclidean space $\mathbb{R}^{n}\cup \{\infty \}$ to
show that under further assumptions more can be said about the degree of $F$
and its index. We recall that $\mathbb{R}^{n}\cup \{\infty \}$ is homeomorphic to the $n-$sphere $S^{n}$.

\begin{proposition}
Suppose that $K$ is a non-saddle continuum, $n$ is even and every component of $(\mathbb{R}^{n}\cup \{\infty \})\setminus K$ is contractible
(this always happens for $n=2$). Consider a connected isolating block $N$
of $K.$ Then $\deg (F,N)\leq 1$, and $I(F_{|_N})\leq 1$ when defined. Also, if $
\deg(F,N)=1$ then $K$ is either an attractor or a repeller.
\end{proposition}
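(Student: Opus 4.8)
The plan is to combine the parity computation from Proposition~\ref{prop:ns1}(1) with a topological analysis of the complement $(\mathbb{R}^n\cup\{\infty\})\setminus K$. Since $n$ is even and $K$ is non-saddle, Proposition~\ref{prop:ns1} already gives $\deg(F,N)=\chi(K)=\chi(N)$, so everything reduces to bounding $\chi(K)$ by $1$ and determining when equality forces $K$ to be an attractor or repeller. The key extra hypothesis is that every component of the complement in $S^n=\mathbb{R}^n\cup\{\infty\}$ is contractible; I would exploit this through Alexander duality to control $\check{H}^*(K)$.

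First I would compactify: view $K\subset S^n$ and write $S^n\setminus K$ as a disjoint union of its connected components $U_0,U_1,\dots,U_r$, where $U_0$ is the component containing $\infty$. By hypothesis each $U_i$ is contractible, so $\check{H}^0(S^n\setminus K)$ is free of rank $r+1$ and all higher reduced cohomology of the complement vanishes. Alexander duality in $S^n$ then reads $\tilde{H}^k(S^n\setminus K)\cong \check{H}_{n-1-k}(K)$ (reduced), which pins down the \v{C}ech homology, and hence via the universal coefficient theorem and finite type the ranks of $\check{H}^*(K)$. Concretely, the rank of $\check{H}_{n-1}(K)$ equals $r$ (the number of bounded complementary components, i.e. components other than $U_0$), and all intermediate reduced homology of $K$ vanishes. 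The upshot is a clean formula for $\chi(K)$ in terms of $r$ and the parity of $n$.

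Next I would assemble the Euler characteristic. Since $n$ is even, $n-1$ is odd, so the rank-$r$ group $\check{H}_{n-1}(K)$ contributes with sign $(-1)^{n-1}=-1$; together with the rank-one $\check{H}_0(K)$ (as $K$ is a continuum, hence connected) this yields $\chi(K)=1-r$. Because $r\ge 0$ we immediately get $\chi(K)\le 1$, and Proposition~\ref{prop:ns1} converts this into $\deg(F,N)\le 1$ and $I(F_{|_N})\le 1$. Equality $\deg(F,N)=1$ forces $r=0$, i.e. $S^n\setminus K$ is connected and contractible, so in particular $\mathbb{R}^n\setminus K$ is connected.

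For the final clause I would argue that when the complement is connected, the non-saddle set $K$ cannot genuinely separate stable from unstable behavior, so its isolating block $N=N^+\cup N^-$ has connected boundary $\partial N$. Since in each component of $\partial N$ the field points purely inward or purely outward, a single boundary component means $N$ is either entirely positively invariant (so $K$ is an attractor) or entirely negatively invariant (so $K$ is a repeller). The main obstacle I anticipate is making the step $r=0\Rightarrow \partial N$ connected fully rigorous: one must show that distinct boundary components of an isolating block would produce distinct complementary components of $K$ in $S^n$, which requires relating the components of $\partial N$ to the components of $S^n\setminus K$ via the flow-retraction $\check{H}^*(K)\cong H^*(N)$ and a careful Mayer--Vietoris or Alexander-duality bookkeeping on $N=N^+\cup N^-$. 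Once that correspondence is established, contractibility of the unique complementary component gives the attractor/repeller dichotomy directly.
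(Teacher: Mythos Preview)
Your argument for the bound $\deg(F,N)\le 1$ is essentially the paper's, only packaged differently: the paper bypasses the rank-by-rank duality computation and simply uses additivity of the Euler characteristic, writing $\chi(K)=\chi(S^n)-\chi(S^n\setminus K)=2-k$ where $k$ is the number of complementary components (your $r+1$). This is quicker but equivalent to your $\chi(K)=1-r$, and both feed into Proposition~\ref{prop:ns1}(1) the same way.

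The genuine divergence is in the equality case. The paper does not argue via the connectedness of $\partial N$; instead it invokes \cite[Theorem~25]{BSdis}, which says that for an isolated non-saddle set each component of $(\mathbb{R}^n\cup\{\infty\})\setminus K$ is either locally attracted or locally repelled by $K$. When there is only one such component, this immediately forces $K$ to be an attractor or a repeller. Your route is workable too, and the step you flag as the obstacle can be completed cleanly: from $r=0$ you get $\check H^{n-1}(K)=0$, hence $H^{n-1}(N)=0$ via the shape equivalence $\check H^*(K)\cong H^*(N)$; then the exact sequence of $(N,\partial N)$ together with Lefschetz duality $H^n(N,\partial N)\cong H_0(N)\cong\mathbb{Z}$ gives $H^{n-1}(\partial N)\cong\mathbb{Z}$, so the closed orientable $(n-1)$-manifold $\partial N$ is connected. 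Either approach lands the conclusion; the paper's citation is shorter, while yours is self-contained once that Lefschetz-duality step is written out.
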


\begin{proof}
Let $k$ be the number of components of $(\mathbb{R}^n\cup\{\infty\})\setminus K$ (which is finite and coincides with $1+\rk \check{H}^{n-1}(K)$ by Alexander duality). Since
they are contractible, they have Euler characteristic equal to one. By
Alexander duality%
\begin{align*}
\chi (K)=\chi (\mathbb{R}^{n}\cup\{\infty\})-\chi ((\mathbb{R}^{n}\cup{\infty})\setminus K)\\
=2-k\leq 1. 
\end{align*}
Then, Proposition~\ref{prop:ns1} ensures that $\deg(F,N)=\chi (K)\leq 1$. Furthermore, the equality $%
2-k=1$ holds if and only if $K$ does not disconnect $\mathbb{R}^{n}\cup\{\infty\}$. In
such a case, the only component of $\mathbb{R}^{n}\cup \{\infty \}\setminus K$ is
locally attracted or locally repelled by $K$ by \cite[Theorem~25]{BSdis}. In the first case $K$ is an
attractor and in the second $K$ is a repeller.
\end{proof}

Next we analyze the situation when the dimension $n$ is odd and greater than one.

\begin{proposition}
Suppose that $K$ is a non-saddle continuum, $n>1$ is odd and 
every component of $(\mathbb{R}^{n}\cup \{\infty \})\setminus K$ is contractible.
Consider a connected isolating block $N$ of $K.$ Then $\deg (F,N)\leq k$,
and $I(F_{|_N})\leq k$ when defined. Furthermore, if $I(F_{|_N})$ is defined,
\begin{enumerate}
\item $I(F_{|_N})=k$ if and only if $K$ is an attractor.

\item $I(F_{|_N})=-k$ if and only if $K$ is a repeller

\item $I(F_{|_N})=0$ if and only if $K$ decomposes $\mathbb{R}^{n}$ in an even
number of components, half of them locally attracted by $K$ and half of them
locally repelled.
\end{enumerate}
Analogous statements hold for $\deg (F,N)$. In this case there is no
requirement that $I(F_{|_N})$ be defined.
\end{proposition}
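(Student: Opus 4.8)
The plan is to combine the key topological input—Alexander duality in the one-point compactification—with the odd-dimensional case of Proposition~\ref{prop:ns1}, exactly as the even-dimensional proposition does. Let $k$ denote the number of components of $(\mathbb{R}^n\cup\{\infty\})\setminus K$, which is finite and equals $1+\rk\check{H}^{n-1}(K)$. Since each component is contractible and hence has Euler characteristic one, Alexander duality yields $\chi(K)=\chi(S^n)-k=2-k$ when $n$ is even, but here $n$ is odd, so I expect $\chi(S^n)=0$ and thus $\chi(K)=-k$. The first step is therefore to recompute the Euler characteristic of $K$ in the odd-dimensional setting and feed it into the formula $I(F_{|_N})=-\chi(N)+\chi(S^\ast)$ from Proposition~\ref{prop:ns1}, recalling that $\chi(N)=\chi(K)$ since $\check{H}^*(N)\cong\check{H}^*(K)$.

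Next I would unpack the final and initial sections. The block has the form $N=N^+\cup N^-$, with $S=n^-$ the union of boundary components where $F$ points outward (locally repelled directions) and $S^\ast=n^+$ the union of components where $F$ points inward (locally attracted directions). Each boundary component sits inside one component of $(\mathbb{R}^n\cup\{\infty\})\setminus K$, and by \cite[Theorem~25]{BSdis} each complementary component is either locally attracted or locally repelled by $K$. Since $S^\ast$ and $S$ are closed $(n-1)$-manifolds and $n-1$ is even, each of their components has nonzero Euler characteristic; the cleanest way to get a clean count is to argue that $\chi(S^\ast)$ equals twice the number of locally attracted components, using that each such boundary component bounds a contractible complementary region. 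The identity $\chi(N)=\tfrac12(\chi(S)+\chi(S^\ast))$ derived in Proposition~\ref{prop:ns1} then forces $\chi(S)+\chi(S^\ast)=2\chi(K)=-2k$, which, combined with the separate counts, should pin down $\chi(S^\ast)$ and $\chi(S)$ in terms of the numbers of attracted and repelled components.

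With these counts in hand, the three cases follow by bookkeeping. If every complementary component is locally attracted, then $S=\emptyset$, $\chi(S)=0$, and $I(F_{|_N})=\tfrac12\chi(S^\ast)=-\chi(K)=k$, i.e.\ $K$ is a global attractor relative to $N$; dually, if all components are repelled then $S^\ast=\emptyset$ and $I(F_{|_N})=-k$, so $K$ is a repeller. The intermediate value $I(F_{|_N})=\tfrac12(\chi(S^\ast)-\chi(S))=0$ happens precisely when $\chi(S^\ast)=\chi(S)$, which (given that attracted and repelled components contribute with the same sign in even codimension) translates into equal numbers of attracted and repelled complementary components. The bound $I(F_{|_N})\le k$ drops out since $\chi(S^\ast)\le -\chi(K)\cdot 2$—more carefully, since $S^\ast$ ranges over at most all $k$ components and each contributes a bounded amount, one gets $I(F_{|_N})=\tfrac12(\chi(S^\ast)-\chi(S))\le k$. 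Finally, the statement for $\deg(F,N)$ follows verbatim by replacing $I(F_{|_N})$ with $\deg(F,N)$ throughout, invoking the degree half of Corollary~\ref{cor:genPoinH} and Proposition~\ref{prop:ns1}, which carry no finiteness requirement.

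I expect the main obstacle to be the precise determination of $\chi(S^\ast)$ and $\chi(S)$ from the geometry of the complementary components. The subtlety is that each component of $S^\ast$ is a closed even-dimensional manifold whose Euler characteristic need not be $1$ in general, so the clean formula $I(F_{|_N})=\pm k$ in the extreme cases relies essentially on the contractibility hypothesis on the complementary components of $(\mathbb{R}^n\cup\{\infty\})\setminus K$; I would need to verify that contractibility of a complementary region, together with the fact that its frontier is the relevant boundary component, forces that boundary manifold to have Euler characteristic exactly $2$ (as for a sphere bounding a contractible region). Getting this Euler-characteristic-$2$ input rigorously—likely via the long exact sequence of the pair and Alexander duality applied componentwise—is the delicate point on which the sharp count, and hence the \emph{if and only if} assertions, rests.
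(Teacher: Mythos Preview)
Your overall plan---feed Proposition~\ref{prop:ns1} with a count of the complementary components---matches the paper, but there is a sign error and your ``delicate point'' is exactly where the paper takes a different route.

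First the sign: for $n$ odd, Alexander duality gives $\tilde{\chi}(K)=(-1)^{n-1}\tilde{\chi}(S^n\setminus K)=\tilde{\chi}(S^n\setminus K)$, so $\chi(K)=\chi(S^n\setminus K)=k$, not $-k$. The subtraction formula $\chi(K)=\chi(S^n)-\chi(S^n\setminus K)$ that you carried over from the even case is precisely what fails when $n$ is odd. Consequently $\chi(S)+\chi(S^*)=2\chi(K)=2k$, not $-2k$.

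Second, the paper does not try to show that each component of $S$ (or $S^*$) has Euler characteristic $2$, nor that there is exactly one such component per complementary region of $S^n\setminus K$---both of which your approach needs and neither of which is obvious. Instead, let $U$ be the union of the $u$ locally repelled components of $S^n\setminus K$; by \cite[Theorem~25]{BSdis} these are well defined, and there is an attractor $A\subset U$ whose basin is $U$. Then $\check H^*(A)\cong H^*(U)$ (Kapitanski--Rodnianski), so $\chi(A)=u$, and Alexander duality for $A$ inside the open manifold $U$ gives $\check H^*(A)\cong H_{n-*}(U,U\setminus A)$, whence $\chi(A)=-\chi(U,U\setminus A)=-\chi(U)+\chi(U\setminus A)$. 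The flow retracts $U\setminus A$ onto the initial section $S$, so $\chi(S)=\chi(A)+\chi(U)=2u$. The dual argument with the repeller $R$ in the locally attracted components $V$ gives $\chi(S^*)=2(k-u)$, and then $I(F_{|_N})=\tfrac12(\chi(S^*)-\chi(S))=k-2u$. All three equivalences now follow by reading off $u=0$, $u=k$, $k=2u$.

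So your outline is salvageable, but the computation you flagged as the obstacle is genuinely an obstacle for your method; the paper sidesteps it entirely by passing to the auxiliary attractor/repeller pair in the complement and using duality there rather than trying to identify the topology of the individual components of $\partial N$.
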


\begin{proof}
By \cite[Theorem~25]{BSdis} we have that all the components of $(\mathbb{R}^{n}\cup\{\infty\})\setminus K$ are either locally attracted or locally repelled by $K$. We denote by $U$ the union of all the components of $(\mathbb{R}^{n}\cup\{\infty\})\setminus K$ which are locally repelled by $K$ and by $V$ the union of all the
components which are locally attracted. Then there is an attractor $A\subset
U$ such that $U$ is the basin of attraction of $A$ and analogously, a
repeller $R\subset V$ such that $V$ is the basin of repulsion of $R$. Let $k$
be the number of components of $\mathbb{R}^{n}\setminus K$ and $u$ the number of
components of $U$. Then, by \cite[Theorem~3.6]{KapRod} $H^{\ast }(U)=\check{H}^{\ast }(A)$ and, by Alexander
duality, $\check{H}^{\ast }(A)=H_{n-\ast }(U,U\setminus A)$. So, since $n$ is odd, we have%
\[
\chi (A)=-\chi (U,U\setminus A)=-\chi (U)+\chi (U\setminus A)=-u+\chi (S). 
\]%
For the last equality, we use the facts that all the components of $U$ are
contractible and that $S$ is a strong deformation retract of $U\setminus A$. Since $%
\chi (A)=\chi (U)=u$ we obtain that $\chi (S)=2u.$

By an analogous argument applied to $R$ and $V$ we obtain that $\chi
(S^{\ast })=2(k-u).$ So, by Proposition~\ref{prop:ns1} we get
\[
I(F_{|_N})=\frac{1}{2}(\chi (S^{\ast }))-\chi (S)=\frac{1}{2}
(2(k-u))-2u)=k-2u, 
\]%
and, since $u\geq 0$, we obtain that $I(F_{|_N})\leq k$.

Also, $I(F_{|_N})=k$ if and only if $u=0,$ which happens if and only if $K$
is an attractor. On the other hand, the equality $I
(F_{|_N})=-k$ holds if and only if $u=k,$ which occurs if and only if $K$ is a
repeller. Finally $I(F_{|_N})=0$ if and only if $k=2u$,
i.e, if and only if the number of components of $\mathbb{R}^{n}-K$ that are
locally repelled by $K$ matches the number of components that are locally
attracted by $K$. 
\end{proof}

\section{Brouwer degree and connecting orbits in attractor repeller decompositions}

In this section we show how to use the Brouwer degree to detect the existence of connecting orbits in attractor-repeller decompositions. We also give an estimate of the Euler characteristics of the set of connecting orbits. Finally, we present an application of these results in order to calculate the Brouwer degree of the Lorenz vector field in an isolating block of the
Lorenz strange set.

We recall that if $K$ is an isolated invariant set and $A\subsetneq K$ is an attractor for the restriction flow $\varphi_{|_K}$, then the set
\[
R=\{x\in K\mid \omega(x)\cap A=\emptyset\}
\]
is non-empty and is a repeller for $\varphi_{|_K}$. The pair $\{A,R\}$ is called \emph{attractor-repeller decomposition} of $K$. Notice that if $K\neq A\cup R$ the orbit of any point $x\notin A\cup R$ satisfies that $\omega(x)\subset A$ and $\omega^*(x)\subset R$. These kind of orbits are the so-called \emph{connecting orbits} between $A$ and $R$.

\begin{proposition}\label{prop:connect}
Let $\{A,R\}$ be an attractor-repeller decomposition of the isolated
invariant set $K$ and $N$ an isolating block of $K$. If $\deg(F,N)\neq
\chi (A)+\chi (R)-\chi (S)$ then there exists an orbit in $K$
connecting $A$ and $R$. Moreover, if $C$ is the union of all connecting
orbits then 
\[
\chi (C)=\chi (A)+\chi (R)-\chi (K).
\]
\end{proposition}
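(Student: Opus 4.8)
The plan is to prove the two assertions separately: the first by contraposition from Corollary~\ref{cor:genPoinH}, and the second by a Mayer--Vietoris computation built on a Lyapunov function for the pair $\{A,R\}$.

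For the first assertion I would argue by contraposition. If no orbit of $K$ connects $R$ to $A$, then by the discussion preceding the statement every point of $K$ lies in $A$ or in $R$, so $K$ is the disjoint union $A\sqcup R$ of two compacta and hence $\chi(K)=\chi(A)+\chi(R)$. Feeding this into Corollary~\ref{cor:genPoinH}, which determines $\deg(F,N)$ from $\chi(K)$ and $\chi(S)$ alone, expresses $\deg(F,N)$ through $\chi(A)$, $\chi(R)$ and $\chi(S)$; the resulting value is the one recorded in the statement. Therefore any discrepancy rules out the equality $K=A\sqcup R$ and forces the existence of a connecting orbit. The same reasoning applies verbatim to $I(F_{|_N})$ when it is defined.

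For the second assertion I would realize the connecting region explicitly. Fix a Lyapunov function $g\colon K\to[0,1]$ for the decomposition $\{A,R\}$, so that $g^{-1}(0)=A$, $g^{-1}(1)=R$, and $g$ strictly decreases along every connecting orbit. Set $\Sigma=g^{-1}(\tfrac12)$, a compact cross-section met exactly once by each connecting orbit, and put $K_a=g^{-1}([0,\tfrac12])$, $K_r=g^{-1}([\tfrac12,1])$. Pushing each point of $C=K\setminus(A\cup R)$ along its orbit to the level $g=\tfrac12$ gives a strong deformation retraction of $C$ onto $\Sigma$, the hitting time $\tau(x)$ being continuous because $g$ is strictly monotone along orbits; hence $\check H^{*}(C)\cong\check H^{*}(\Sigma)$ and $\chi(C)=\chi(\Sigma)$.

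Now $K_a$ is a compact positively invariant neighborhood of the attractor $A$ inside its basin and $K_r$ a compact negatively invariant neighborhood of the repeller $R$, so the flow yields the shape equivalences $\Sh(K_a)=\Sh(A)$ and $\Sh(K_r)=\Sh(R)$ (as in the use of \cite[Theorem~3.6]{KapRod} above), whence $\chi(K_a)=\chi(A)$ and $\chi(K_r)=\chi(R)$. Since $K=K_a\cup K_r$ is a closed cover with $K_a\cap K_r=\Sigma$, the Mayer--Vietoris sequence of \v Cech cohomology gives $\chi(K)=\chi(K_a)+\chi(K_r)-\chi(\Sigma)=\chi(A)+\chi(R)-\chi(\Sigma)$, and combining this with $\chi(C)=\chi(\Sigma)$ yields $\chi(C)=\chi(A)+\chi(R)-\chi(K)$. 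I expect the main obstacle to be topological bookkeeping rather than dynamics: one must justify the shape equivalences $\Sh(K_a)=\Sh(A)$ and $\Sh(K_r)=\Sh(R)$ and the retraction of $C$ onto $\Sigma$ at the level of \v Cech cohomology, and verify that the closed cover $\{K_a,K_r\}$ is excisive so that Mayer--Vietoris applies and all Euler characteristics are defined under the standing finite-type assumption.
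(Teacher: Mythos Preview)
Your proposal is correct and follows essentially the same route as the paper: contraposition via Corollary~\ref{cor:genPoinH} for the first part, and a Mayer--Vietoris argument based on a cross-section of the connecting orbits for the second. The only cosmetic difference is that the paper obtains the section $C_0$ by invoking the parallelizability of $C$ directly, whereas you realize it as a level set of a Lyapunov function; the resulting decomposition $K=K_a\cup K_r$ with intersection $\Sigma$ is exactly the paper's $K=K_1\cup K_2$ with intersection $C_0$, and the shape equivalences and Mayer--Vietoris computation are identical.
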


\begin{proof}
We argue by contradiction. Suppose that  there is no orbit in $K$ connecting $A$ and $R$.
Then $K$ is the disjoiunt union of $A$ and $R$. As a consequence, 
\[
\chi (K)=\chi(A)+\chi (R).
\]
Thus, Corollary~\ref{cor:genPoinH} ensures that
\[
\deg(F,N)=\chi
(A)+\chi (R)-\chi (S)
\] 
contradicting the hypothesis. 

Let us compute the Euler characteristic of the set $C$ of connecting orbits. Since $C$ is parallelizable we can find a section  $C_{0}$ of $C$. Define $K_{1}=A\cup C_{0}\lbrack 0,\infty )$ and 
$K_{2}=R\cup C_0(-\infty ,0].$ Then $K=K_{1}\cup K_{2}$ and $K_{1}\cap
K_{2}=C_{0}$. By using the Mayer-Vietoris sequence 
\[
\cdots\longrightarrow \check{H}^{q}(K_{1}\cup K_{2})\longrightarrow \check{H}^{q}(K_{1})\oplus
\check{H}^{q}(K_{2})\longrightarrow \check{H}^{q}(K_{1}\cap K_{2})\longrightarrow \cdots 
\]%
we readily get that 
\[
\chi (K)=\chi (K_{1})+\chi (K_{2})-\chi (C_{0}).
\]
However, $C_{0}$ is a strong deformation retract of $C$ and so $
\chi (C)=\chi (C_{0}).$ Moreover, arguing in the same way as in the proof of Theorem~\ref{thm:1} we obtain that $%
\Sh(K_{1})=\Sh(A)$ and $\Sh(K_{2})=\Sh(R)$ and therefore $\chi
(K_{1})=\chi (A)$ and $\chi (K_{2})=\chi (R)$. Hence, the result follows.
\end{proof}

 The Lorenz vector field $F:\mathbb{R}^{3}\longrightarrow 
\mathbb{R}^{3}$ provides a simplified model of fluid convection dynamics in
the atmosphere, and is given by 
\[
F(x,y,z)=(\sigma (y-x),rx-y-xz,xy-bz),
\]
where $\sigma ,r$ and $b$ are three real positive parameters corresponding
respectively to the Prandtl number, the Rayleigh number and an adimensional
magnitude. We consider the so-called classical values $\sigma=10,b=8/3.$ In \cite{Spa} it is
shown that for values of $r$ between $13.926\ldots$ (which corresponds to the
homoclinic bifurcation) and $24.06$ (where another type of bifurcation
occurs involving the two branches of the unstable manifold of the origin) a
\textquotedblleft strange set\textquotedblright\ $\mathcal{L}$ originates that
exhibits sensitive dependence on initial conditions. For these values of the
parameter $r$, the global attractor $\Omega $ of the Lorenz system has an
attractor-repeller decomposition $\{A,\mathcal{L}\}$ where $\mathcal{L}$ is the Lorenz strange
set and $A$ consists of two points. It follows from \cite[Corollary~5.3]{GSIAM} and \cite[Theorem~7]{Sanjhopf} that the Lorenz strange set has the shape of a wedge of two circles. Therefore, $\chi(\mathcal{L})=-1$.  We shall also make use of the fact that, since $\Omega $ is the global attractor, 
$\chi (\Omega )=1$. 

\begin{proposition}
Let $F:\mathbb{R}^{3}\longrightarrow \mathbb{R}^{3}$ be the Lorenz vector field
and let $N$ be an isolating block of the Lorenz strange set $L.$ Then $%
\deg(F,N)=1.$ Moreover, if $\hat{N}$ is an isolating block of the
global attractor $\Omega$, then $\deg(F,\hat{N})=-1$.
\end{proposition}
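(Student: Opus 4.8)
The plan is to compute the two degrees separately by applying the machinery developed in the paper, principally Proposition~\ref{prop:ns1} and Proposition~\ref{prop:connect}, together with the shape-theoretic data recorded just before the statement: $\chi(\mathcal{L})=-1$, $\chi(\Omega)=1$, and the fact that $A$ consists of two points so that $\chi(A)=2$. The ambient dimension here is $n=3$, which is odd, so I must keep careful track of signs coming from the odd-dimensional Lefschetz/Alexander duality arguments used throughout Section~2.

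For the second assertion, about $\hat N$, the attractor $\Omega$ is the \emph{global} attractor of the Lorenz flow. By the same reasoning used in the corollary on global repellers (applied in reverse time, or directly since a global attractor is negatively\,-co\,isolated), one sees $\Omega$ is non-saddle and $\hat N$ can be taken negatively invariant-free; more to the point, $\Omega$ is an attractor, so the final section $S^*=n^+$ is empty and the initial section $S=n^-$ is the whole boundary $\partial\hat N$. Since $\Omega$ is an attractor the truncated unstable manifold $W^u_S$ is just a collar, a genuine $n$-manifold with $\partial W^u_S=S$ and $m=n=3$; thus Proposition~\ref{prop:unman} applies with $n=m=3$, giving $\deg(F,\hat N)=(-1)^{n+m}\chi(\Omega)=\chi(\Omega)=1$. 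But that contradicts the claimed value $-1$, so in fact I expect $\Omega$ to be treated as a \emph{repeller}-type situation is wrong; rather the correct route is to recognize that for an attractor in odd dimension the relevant instance of Proposition~\ref{prop:ns1}(2) with $\chi(S^*)=0$ gives $\deg(F,\hat N)=-\chi(\hat N)=-\chi(\Omega)=-1$. The subtle point, and the first place I must be careful, is reconciling which duality sign applies: a global attractor in $\mathbb{R}^3$ has $S=\partial\hat N$ a closed surface and $S^*=\emptyset$, and Proposition~\ref{prop:ns1}(2) then yields $I(F_{|\hat N})=\tfrac12(\chi(S^*)-\chi(S))=-\tfrac12\chi(S)=-\chi(\hat N)=-1$.

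For the first assertion, about the isolating block $N$ of the strange set $\mathcal{L}$, I would invoke Proposition~\ref{prop:connect} applied to the attractor-repeller decomposition $\{A,\mathcal{L}\}$ of the global attractor $\Omega$. Here $A$ (two points) is the attractor and $\mathcal{L}$ the repeller inside $\Omega$, so the formula $\chi(\Omega)=\chi(A)+\chi(\mathcal{L})-\chi(C)$ holds, where $C$ is the union of connecting orbits; with $\chi(\Omega)=1$, $\chi(A)=2$, $\chi(\mathcal{L})=-1$ this gives $\chi(C)=0$, a useful consistency check but not directly the degree. To get $\deg(F,N)$ I instead apply Corollary~\ref{cor:genPoinH} directly to $K=\mathcal{L}$: since $n=3$, $\deg(F,N)=(-1)^3(\chi(\mathcal{L})-\chi(S))=-(\chi(\mathcal{L})-\chi(S))$, so the whole computation reduces to identifying the Euler characteristic of an initial section $S$ of the truncated unstable manifold of $\mathcal{L}$.

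The main obstacle, and where the real work lies, is computing $\chi(S)$ for the strange set. Unlike the global attractor, $\mathcal{L}$ is a genuine saddle set with nontrivial unstable manifold, and $S$ is a closed surface (a closed $2$-manifold, since $n-1=2$) whose topology I must pin down from the known structure of the Lorenz template. I would argue that the unstable manifold $W^u(\mathcal{L})$ fills out essentially all of $\Omega\setminus A$, so that an initial section $S$ is the boundary of a neighborhood of $\mathcal{L}$ carved out of the solid region between $A$ and $\mathcal{L}$; using $\chi(\mathcal{L})=-1$ (wedge of two circles) and Proposition~\ref{prop:ns1}\,/\,the connecting-orbit count $\chi(C)=0$ I expect to deduce $\chi(S)=2$, whence $\deg(F,N)=-(-1-2)$ is too large, so the correct reconciliation must give $\chi(S)=2$ producing $\deg(F,N)=-(\chi(\mathcal{L})-\chi(S))=-(-1-(-2))$; I would therefore determine $\chi(S)$ carefully, most likely obtaining $\chi(S)=-2$ so that $\deg(F,N)=-(-1-(-2))=-1$ is again off, which signals that the genuinely delicate step is fixing the sign and value of $\chi(S)$ from the orientation and the two-lobed structure of the attractor. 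Settling this — reading off $\chi(S)$ from the Lorenz template and the relation $\chi(S)=2u$ of the odd-dimensional non-saddle analysis — is the crux, and once $\chi(S)$ is correctly identified the identity $\deg(F,N)=1$ follows immediately from Corollary~\ref{cor:genPoinH}.
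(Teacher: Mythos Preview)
Your computation of $\chi(C)=0$ via Proposition~\ref{prop:connect} is not merely a consistency check: it is the entire missing ingredient. Since $\Omega$ is a global attractor, the unstable manifold of $\mathcal{L}$ is trapped inside $\Omega$; as $\mathcal{L}$ is the repeller of the attractor--repeller pair $\{A,\mathcal{L}\}$ in $\Omega$, every point of $W^{u}(\mathcal{L})\setminus\mathcal{L}$ has its $\omega$-limit in $A$ and its $\omega^{*}$-limit in $\mathcal{L}$, so $W^{u}(\mathcal{L})\setminus\mathcal{L}=C$ exactly. Any initial section $S$ of the truncated unstable manifold is therefore a section of $C$, and since $C$ deformation retracts onto any of its sections, $\chi(S)=\chi(C)=0$. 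Corollary~\ref{cor:genPoinH} then gives $\deg(F,N)=(-1)^{3}(\chi(\mathcal{L})-\chi(S))=-(-1-0)=1$. This is precisely the paper's argument; you had it in hand and discarded it.

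Two further points of confusion in your sketch. First, for the global attractor $\Omega$ you have reversed the roles of $S$ and $S^{*}$: an attractor has empty truncated unstable manifold, so the initial section is $S=\emptyset$, while the final section $S^{*}$ is all of $\partial\hat N$. Corollary~\ref{cor:genPoinH} with $S=\emptyset$ gives $\deg(F,\hat N)=(-1)^{3}\chi(\Omega)=-1$ in one line; there is no need for Proposition~\ref{prop:unman} (which does not apply, since there is no $m$-dimensional $W^{u}_{S}$ here at all) or for Proposition~\ref{prop:ns1}. Second, the strange set $\mathcal{L}$ is a genuine saddle in $\mathbb{R}^{3}$, not a non-saddle set, so neither Proposition~\ref{prop:ns1} nor the relation $\chi(S)=2u$ from the odd-dimensional non-saddle analysis is available; and its initial section $S$ has no reason to be a closed $2$-manifold. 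Your attempts with $\chi(S)=\pm 2$ and the reference to the Lorenz template are therefore chasing a false premise.
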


\begin{proof}
Let $C$ be the set of connecting orbits between $A$ and $\mathcal{L}$. Then, Proposition~\ref{prop:connect} together with the considerations made before the statement of the proposition ensure that 
\[
\chi (C)=\chi (A)+\chi (\mathcal{L})-\chi (\Omega )=0. 
\]%
Since $\Omega $ is an attractor, the unstable manifold of $\mathcal{L}$ is contained
in $\Omega $ and, thus, agrees with $C.$ Now, let $N$ be an isolating block
of $\mathcal{L}$. By Corollary~\ref{cor:genPoinH} we get 
\[
\deg(F,N)=(-1)^{3}(\chi (\mathcal{L})-\chi (S))=(-1)^{3}(\chi
(\mathcal{L})-\chi (C))=1. 
\]

This contrasts with the situation for the global attractor: if $\hat{N}$ is
an isolating block of $\Omega $ then $\deg(F,\hat{N})=(-1)^{3}\chi ($ $%
\Omega )=-1$.
\end{proof}

\begin{remark}
For values of the parameter $r>24.06$ the strange set $\mathcal{L}$ becomes an
attractor (the Lorenz attractor) and its \v Cech cohomology (even its shape) remains
that of a wedge of two circles. Since $\mathcal{L}$ is now an attractor, $S=\emptyset $ and 
\[
\deg(F,N)=(-1)^{3}\chi (\mathcal{L})=1.
\]
\end{remark}

\section{A generalitation of Borsuk's and Hirsch's antipodal theorems}

We now present a result that is a form of Borsuk's \cite[Theorem~5.2, pg. 163]{OuRu} and Hirsch's \cite[Theorem~5.3, pg. 166]{OuRu} antipodal
theorems for domains which are isolating blocks, involving the dynamics of
the flow induced by $F$ rather than the Brouwer degree of $F$. Using this
result it is possible to conclude from inspection of $K$ and $S$ the
existence of a point $x$ in the boundary $\partial N$ such that the vector
field $F$ points in the same (or opposite) direction at $x$ and $-x$. 

We say that an isolating block $N\subset\mathbb{R}^n$ is \emph{symmetric} if $x\in N$ if and only if $-x\in N$, i.e., $N$ is invariant for the antipodal action. 
\begin{proposition}
Suppose that the isolating block $N$ is symmetric and $0\in N$. Then
\begin{enumerate}
\item[(i)] If $\chi (K)$ and $\chi (S)$ have the same parity then there
is some $x\in \partial N$ such that $F(x)$ and $F(-x)$ point in the same
direction. In particular, if $N$ is the unit ball $B^{n}$ (and thus $\partial
N=S^{n-1})$ and $F_{|_{S^{n-1}}}$ maps $S^{n-1}$ into $S^{n-1}$ then there is some 
$x\in S^{n-1}$ such that $F(x)=F(-x)$.

\item[(ii)] If $\chi (K)$ and $\chi (S)$ have different parity then there
is some $x\in \partial N$ such that $F(x)$ and $F(-x)$ point in opposite
directions. In particular, if $N$ is the unit ball $B^{n}$ (and thus $\partial
N=S^{n-1})$ and $F_{|_{S^{n-1}}}$ maps $S^{n-1}$ into $S^{n-1}$ then there is some 
$x\in S^{n-1}$ such that $F(x)=-F(-x)$.
\end{enumerate}
\end{proposition}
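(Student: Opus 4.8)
The plan is to reduce everything to the parity of the Brouwer degree $\deg(F,N)$ and then to invoke the antipodal theorems of Borsuk and Hirsch in their general form for symmetric domains. First I would observe that, since the equilibria of $F$ all lie in $K\subset\mathring N$, the field $F$ does not vanish on $\partial N$, so $\deg(F,N)$ is exactly the Brouwer degree of $F$ on the symmetric open set $\mathring N$, and the cited antipodal theorems apply to it verbatim. By Corollary~\ref{cor:genPoinH} we have $\deg(F,N)=(-1)^{n}(\chi(K)-\chi(S))$, so the parity of $\deg(F,N)$ coincides with that of $\chi(K)-\chi(S)$; thus $\chi(K)$ and $\chi(S)$ have the same parity precisely when $\deg(F,N)$ is even, and different parity precisely when $\deg(F,N)$ is odd.

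For (i) I would argue by contradiction: assume that $F(x)$ and $F(-x)$ never point in the same direction on $\partial N$, that is, $F(x)$ is never a \emph{positive} multiple of $F(-x)$. I then consider the homotopy $F_{t}(x)=F(x)-tF(-x)$, $t\in[0,1]$, joining $F$ to the odd field $F_{1}(x)=F(x)-F(-x)$. A zero of $F_{t}$ on $\partial N$ would force $F(x)=tF(-x)$; the case $t=0$ is excluded because $F\neq 0$ on $\partial N$, and any $t>0$ would make $F(x)$ a positive multiple of $F(-x)$, contrary to the assumption. Hence $F_{t}$ is a nonvanishing homotopy on $\partial N$, so $\deg(F,N)=\deg(F_{1},N)$, and Borsuk's antipodal theorem, applied to the odd field $F_{1}$ on the symmetric block $N$, forces this common degree to be \emph{odd}. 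Taking the contrapositive, if $\deg(F,N)$ is even (the same-parity case) a point $x\in\partial N$ at which $F(x)$ and $F(-x)$ point in the same direction must exist.

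Part (ii) is entirely parallel: I would now rule out points where $F(x)$ and $F(-x)$ point in opposite directions (that is, where $F(x)$ is a negative multiple of $F(-x)$) and use instead the homotopy $F_{t}(x)=F(x)+tF(-x)$ toward the even field $F_{1}(x)=F(x)+F(-x)$. The same nonvanishing check goes through, and Hirsch's antipodal theorem forces $\deg(F,N)$ to be even, so its being odd (the different-parity case) yields the desired opposite-direction point. For the two ball specializations, when $N=B^{n}$ and $F$ restricts to a self-map of $S^{n-1}$ the normalized field agrees with $F$ on the boundary, so ``same direction'' becomes the genuine equality $F(x)=F(-x)$ and ``opposite direction'' becomes $F(x)=-F(-x)$, recovering the classical conclusions.

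The only delicate point is the nonvanishing of these linking homotopies on $\partial N$, which is exactly where the precise reading of ``point in the same (opposite) direction'' as a positive (negative) scalar multiple is used; once this is secured, the statement is a clean translation of the degree parity. The essential structural input is that Borsuk's and Hirsch's theorems hold for arbitrary bounded symmetric open neighborhoods of the origin, not merely for balls, so that no sphericity of $\partial N$ is required and the general isolating block is handled on the same footing as the ball.
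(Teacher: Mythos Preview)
Your proof is correct and follows the same route as the paper: reduce to the parity of $\deg(F,N)$ via Corollary~\ref{cor:genPoinH}, then invoke the Borsuk and Hirsch antipodal theorems on the symmetric domain $\mathring N$. The only difference is cosmetic: the paper cites the Outerelo--Ruiz versions of these theorems (already stated in the ``never same/opposite direction'' form) as black boxes, whereas you unwind them one step further by exhibiting the linear homotopies $F(x)\mp tF(-x)$ to an odd or even field and then appealing to the more primitive statements that odd (resp.\ even) boundary data force odd (resp.\ even) degree. This extra detail is standard and does not constitute a different strategy; note also that your naming of which theorem is ``Borsuk'' and which is ``Hirsch'' is swapped relative to the paper's citations, but the mathematics is identical.
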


\begin{proof}
If $\chi (K)$ and $\chi (S)$ have the same parity then by Theorem~\ref{cor:genPoinH} the degree of $F_{|_{\mathring{N}}}$ is even and (i) is a consequence of the Hirsch
theorem. If $\chi (K)$ and $\chi (S)$ have a different parity then
by Corollary~\ref{cor:genPoinH} the degree of $F_{|_{\mathring{N}}}$ is odd and (i) is a consequence of the
Borsuk antipodal theorem.
\end{proof}

\bibliographystyle{plain}
\bibliography{Biblio1}

\end{document}